\newcommand{\R}{\mathbb{R}}
\newcommand{\dd}{\mathbf{d}}
\newcommand{\U}{{\mathbf{u}}}
\newcommand{\B}{{\mathbf{B}}}
\newcommand{\X}{{\mathbf{x}}}
\newcommand{\E}{{\mathbf{E}}}
\newcommand{\J}{{\mathbf{J}}}
\newcommand{\s}{{\mathbf{S}}}
\newcommand{\n}{{\mathbf{n}}}
\newcommand{\Div}{\mathrm{div}}
\newcommand{\Curl}{\mathrm{curl}}
\newcommand{\tildeCurl}{\widetilde{\Curl}}
\newcommand{\diag}[1]{\mathrm{diag}\left(#1\right)}
\newcommand{\bD}{\mathfrak{d}}
\newcommand{\bP}{\mathbf{P}}
\newcommand{\mL}{\mathcal{L}}
\newcommand{\mR}{\mathcal{R}}
\newcommand{\mD}{\mathcal{D}}
\newcommand{\mU}{\mathcal{U}}
\newcommand{\mA}{\mathcal{A}}
\newcommand{\mB}{\mathcal{B}}
\newcommand{\crl}{\frak{curl}}
\newcommand{\bV}{\mathbf{V}}
\newcommand{\Comment}[1]{}
\newcommand{\eps} {\varepsilon}
\renewcommand{\i}{\ifmmode\mathit{\mathchar"7010 }\else\char"10 \fi}
\renewcommand{\j}{\ifmmode\mathit{\mathchar"7011 }\else\char"11 \fi}
\newcommand{\seq}[1]{\left\{#1\right\}}
\newcommand{\Dx}{\Delta x}
\newcommand{\Dy}{\Delta y}
\newcommand{\Dz}{\Delta z}
\newcommand{\norm}[1]{\left\|#1\right\|}
\newcommand{\abs}[1]{\left|#1\right|}
\newtheorem{theorem}{Theorem}[section]
\newtheorem{lemma}{Lemma}[section]
\newtheorem{remark}{Remark}[section]
\theoremstyle{definition} 
\newtheorem*{maintheorem*}{Main Theorem}
\numberwithin{equation}{section}
\numberwithin{figure}{section}
\numberwithin{table}{section}
\newcounter{asnr}
\ifnum\value{asnr}=0 \stepcounter{asnr} 
\title[SBP-SAT schemes for Magnetic induction equations]{Higher order
  finite difference schemes for the Magnetic Induction
  equations with resistivity}
\author[U. Koley]{U. Koley} \address[Ujjwal Koley]{\newline Centre of
  Mathematics for Applications (CMA) \newline University of
  Oslo\newline P.O. Box 1053, Blindern\newline N--0316 Oslo, Norway}
\email[]{ujjwalk@cma.uio.no}
\author[S. Mishra]{S. Mishra} \address[Siddhartha Mishra]{\newline
  Centre of Mathematics for Applications (CMA) \newline University of
  Oslo\newline P.O. Box 1053, Blindern\newline N--0316 Oslo, Norway}
\email[]{siddharm@cma.uio.no}
\author[N. H. Risebro]{N. H. Risebro} \address[Nils Henrik
Risebro]{\newline Centre of Mathematics for Applications (CMA)
  \newline University of Oslo\newline P.O. Box 1053, Blindern\newline
  N--0316 Oslo, Norway} \email[]{nilshr@math.uio.no}
\author[M. sv\"{a}rd]{M. Sv\"{a}rd} \address[Magnus Sv\"{a}rd]
{\newline Centre of Mathematics for Applications (CMA)
  \newline University of Oslo\newline P.O. Box 1053, Blindern\newline
  N--0316 Oslo, Norway} \email[]{magnus.svard@cma.uio.no}
\keywords{
induction equation, magnetic resistivity, finite differences, SBP-operators.}
\date{\today}
\begin{document}

\begin{abstract}
  In this paper, we design high order accurate and stable finite
  difference schemes for the initial-boundary value problem, associated
  with the magnetic induction equation with resistivity. We use Summation-By-Parts
  (SBP) finite difference operators to approximate spatial derivatives
  and a Simultaneous Approximation Term (SAT) technique for
  implementing boundary conditions. The resulting schemes are shown to be energy stable. Various numerical experiments
  demonstrating both the stability and the high order of accuracy of the
  schemes are presented.
\end{abstract}

\maketitle

\section{Introduction}
\label{sec:intro}
Many interesting problems in astrophysics and engineering involve evolution of macroscopic plasmas, modeled by the equations of MagnetoHydroDynamics (MHD). These equations (\cite{BIS1}) are a system of convection-diffusion equations with the magnetic resistivity and heat conduction playing the role of diffusion. Many applications like plasma thrusters for deep space propulsion and electromagnetic pulse devices (\cite{tg}) involve small (but non-zero) values of the magnetic resistivity. Hence, the design of efficient numerical methods for the resistive MHD equations is essential for simulating some of the afore mentioned models.

Numerical study of the ideal MHD equations (where magnetic resistivity and other diffusions terms are neglected) has witnessed considerable progress in recent years and a variety of numerical methods are available (see \cite{frsid1} for a review of the available literature). The design of numerical schemes for the resistive MHD equations has not reached the same stage of maturity as the presence of magnetic resistivity complicates the design of stable methods even further. Given the formidable difficulties, study of prototypical sub-models (that mirror some, but not all of the difficulties of the resistive MHD equations) can be a useful guide for obtaining robust methods for the resistive MHD equations.

In this paper, we consider the magnetic induction equations with resistivity. Recent papers (\cite{unsm,fkrsid1}) have pointed out the role that the magnetic induction equation (without resistivity) plays in the design of numerical schemes for the ideal (inviscid) MHD equations. The induction equations with resistivity can play a similar role for designing stable methods for the resistive MHD equations. Our goal in this paper is to design stable and high-order
accurate numerical schemes for the magnetic induction equations with
resistivity.

We start with a brief description of how the equations are derived.
In a moving medium, the time rate of change of the magnetic flux
across a given surface $\s$ bounded by curve $\partial\s$ is
given by (see \cite{Pano}):
\begin{equation*}
  \frac {d}{dt} \int\limits_{S} \B\cdot \dd\s = \int\limits_{S} \frac
  {\partial \B}{\partial t}\cdot \dd\s +\oint\limits_{\partial S} \B
  \times \U \cdot dl + \int\limits_{S} ({\rm div} (\B))\U \cdot \dd\s
  + \epsilon \oint\limits_{\partial S} \J \cdot dl,
\end{equation*}
where the unknown $\B=\B(\X,t)\in\R^3$ denotes the magnetic field,
$\J=\J(\X,t)\in\R^3$ the current density and $\X=(x,y,z)$ are the spatial coordinates. The current density is given by: $\J= \Curl(\B)$. The parameter $\epsilon$ denotes the magnetic resistivity, and $\U(\X,t)$ the
(given) velocity field.

Using Faraday's law:
\begin{equation}\label{eq:Faraday}
  -\frac {d}{dt}\int\limits_{S} \B \cdot \dd\s =
  \oint\limits_{\partial S}\E'\cdot dl,
\end{equation}
 Stokes' theorem, the fact that the electric field $\E'=0$ in a co-moving frame and  $ \E' = \E + \U \times \B $ we obtain, 
\begin{equation}
  \label{eq:viscousinduction}
  \frac {\partial \B}{\partial t} + \Curl(\B \times \U) = - \U {\rm
    div}(\B)- \epsilon\Curl(\Curl(\B)). 
\end{equation}
Magnetic monopoles have never been observed in nature. As a
consequence, the magnetic field is always assumed to be divergence
free, i.e., $\Div(\B)=0$. Using this constraing in
\eqref{eq:viscousinduction}, we obtain the system:
\begin{equation}
  \label{eq:Maxwell3D}
  \begin{aligned}
    \partial_t \B + \Curl(\B \times \U) &= - \epsilon\Curl(\Curl(\B)),\\
    \Div(\B) &= 0.
  \end{aligned}
\end{equation}
The above equation is an example of a convection-diffusion equation. The version obtained by taking zero resistivity ($\epsilon=0$) in \eqref{eq:Maxwell3D} is termed the magnetic induction equation (\cite{TF1}). A standard way to obtain a bound on the solutions of convection-diffusion equations like \eqref{eq:Maxwell3D} is to use the energy
method.  However \eqref{eq:Maxwell3D} is not symmetrizable. Consequently it may not be possible to obtain an energy estimate for this
system. 

On the other hand, \eqref{eq:viscousinduction} is symmetrizable. We use the following vector identity 
\begin{align*}
  \Curl(\B\times \U)&=\B\Div \U - \U\Div (\B) +
  \left(\U\cdot\nabla\right) \B -
  \left(\B\cdot\nabla\right) \U\\
  &=\left(u^1\B\right)_x+\left(u^2\B\right)_y+\left(u^3\B\right)_z -
  \U\Div(\B) - (\B\cdot\nabla)\U,
\end{align*}
and rewrite \eqref{eq:viscousinduction} in the form,
\begin{equation}
  \begin{aligned}
    \partial_t\B + \left(\U\cdot\nabla\right)\B&=
    -\B(\Div\U)+(\B\cdot\nabla)\U -  \epsilon\Curl(\Curl(\B))\\
    &=M(D\U)\B  -  \epsilon\Curl(\Curl(\B)),
  \end{aligned}
  \label{eq:viscousinduc1}  
\end{equation}
where the matrix $M(D\U)$ is given by
$$
M(D\U)=
 \begin{pmatrix}
   -\partial_y u^2 - \partial_z u^3 & \partial_y u^1 & \partial_z
   u^1\\
   \partial_x u^2 & -\partial_x u^1-\partial_z u^3 & \partial_z u^2\\
   \partial_x u^3 & \partial_y u^3 & -\partial_x u^1 - \partial_y u^2
  \end{pmatrix}.
  $$
  Introducing the matrix,
  $$
  C=-\begin{pmatrix}
    \partial_x u^1 & \partial_y u^1 & \partial_z u^1 \\
    \partial_x u^2 & \partial_y u^2 & \partial_z u^2 \\
    \partial_x u^3 & \partial_y u^3 & \partial_z u^1
\end{pmatrix},
$$
\eqref{eq:viscousinduction} can also be written in the following
form,
\begin{equation}
  \label{eq:viscousinduc2}
  \partial_t\B + \partial_x\left(A^1\B\right)
  +\partial_y\left(A^2\B\right)+\partial_z\left(A^3\B\right)+C\B
  =- \epsilon\Curl(\Curl(\B)), 
\end{equation}
where $A^i=u^iI$ for $i=1,2,3$. Note that the symmetrized matrices in
\eqref{eq:viscousinduc2} are diagonal and that the coupling in the
equations are through both the lower order source terms and the viscous terms.

Furthermore,  taking the divergence of both sides of
\eqref{eq:viscousinduction} we obtain,
\begin{equation}
  \label{eq:divt}
  (\Div(\B))_t + \Div\left(\U\Div(\B)\right) = 0.
\end{equation}
Hence, if $\Div(\B(\X,0))=0$, it follows that $\Div(\B(\X,t))=0$ for
$t>0$. This implies that all the above forms \eqref{eq:viscousinduc2},
\eqref{eq:Maxwell3D} and \eqref{eq:viscousinduction} are equivalent
(at least for smooth solutions).

Although the magnetic induction equations with resistivity are linear, the
coefficients are functions of $\X$ and $t$. Therefore, closed form
solutions are not available and we must resort to numerical methods
in order to calculate (approximate) solutions. Consequently, it is
important to design efficient numerical methods for these equations. 

As mentioned before the magnetic induction equations is a sub-model in
the resistive MHD equations. Hence, design of stable and high-order accurate
numerical schemes for the viscous induction equations can lead to
robust schemes for the non-linear resistive MHD equations.

The presence of the $\Div(\B)= 0$ constraint leads to numerical
difficulties. Small divergence errors may
change the nature of results from numerical
simulations (see \cite{BlBr1,Toth1} for details on the role of divergence in ideal MHD codes). Our approach to treating the constraint follows the method developed in \cite{Pano,fkrsid1,unsm} and involves discretizing 
\eqref{eq:viscousinduc2}. TFurthermore, a proper discretization of the symmetric form \eqref{eq:viscousinduc2} yields energy estimates. These estimates are vital in proving
existence of weak solutions.  We will approximate
spatial derivatives by second and fourth order SBP (``Summation By
Parts'') operators. The boundary conditions of both the Dirichlet and mixed type are weakly imposed by
using a SAT (``Simultaneous Approximation Term''). This work is an extension of the SBP-SAT schemes for the case without resistivity ($\epsilon=0$) found in a recent paper \cite{unsm}.

We would like to emphasize that other numerical frameworks like mixed finite elements, discrete duality finite volume or mimetic finite differences might also lead to stable schemes for approximating the induction equations with resistivity. However, we are not aware of any papers that have approximated the resistive induction equations with these approaches.

The rest of this paper is organized as follows: In
Section~\ref{sec:cp}, we state the energy estimate for the
initial-boundary value problem corresponding to
\eqref{eq:viscousinduc1} in order to motivate the proof of stability
for the scheme. This is done for  mixed type and Dirichlet boundary
conditions. In
Section~\ref{sec:scheme}, we present the SBP-SAT scheme and show its
stability with both Dirichlet and mixed boundary conditions.
Numerical experiments are presented in Section~\ref{sec:numex} and
conclusions from this paper are drawn in Section~\ref{sec:conc}.

\section{The Continuous problem} \label{sec:cp} 
For simplicity and notational convenience, we restrict
ourselves to two spatial dimensions in the remainder of this paper. Extending our results to three
dimensions is straightforward.

In two dimensions, \eqref{eq:viscousinduc1} reads
\begin{equation}
  \label{eq:main}
  \begin{gathered}
    \B_t + \Lambda_1\B_x + \Lambda_2\B_y - C\B = -\epsilon
    \nabla\times\left(\nabla\times \B\right),\\
    \text{where}\quad -\nabla\times\left(\nabla\times \B\right) =
    \left[
      \begin{aligned}
        - \left((B^2)_{xy}-(B^1)_{yy}\right)\\
        \left((B^2)_{xx}-(B^1)_{xy}\right)
      \end{aligned}
    \right],
  \end{gathered}
\end{equation}
and
\begin{equation*}
  \Lambda_1=
  \begin{pmatrix}
    u^1 & 0 \\
    0 & u^1
  \end{pmatrix},\quad
  \Lambda_2=
  \begin{pmatrix}
    u^2 & 0 \\
    0 & u^2
  \end{pmatrix},\quad
  C=\begin{pmatrix}
    -\partial_y u^2 & \partial_y u^1 \\
    \partial_x u^2 & -\partial_x u^1
  \end{pmatrix},
\end{equation*}
with $\B = \left( B^1, B^2\right)^T$ and $\U = \left(u^1,u^2\right)^T$
denoting the magnetic and velocity fields respectively.  Throughout
this paper, we consider \eqref{eq:main} in a smooth domain
$\Omega$.  One can extend our results to general piecewise smooth
boundaries by a standard procedure.  We
augment \eqref{eq:main} with initial conditions,
\begin{equation}
  \label{eq:initial}
  \B(\X,0) = \B_0(\X), \quad \X\in \Omega,
\end{equation}
and Dirichlet or mixed boundary conditions with homogeneous boundary
data. The Dirichlet boundary conditions are given as
\begin{equation}
  \label{eq:dirichletboundary}
  \B(\X,t)=0,\quad\text{$\X\in\partial \Omega$.}
\end{equation}
In order to specify the mixed boundary conditions, we need some
notation.  Let $\n(\X)$ denote the outward pointing unit normal at a
point $\X\in \partial \Omega$. Define 
\begin{align}
  \tildeCurl(\B)\times \n &=\begin{pmatrix} -n^2(B^2_x-B^1_y)\\
    n^1(B^2_x-B^1_y)
  \end{pmatrix}
  \  \text{and}\\
  \tildeCurl(\B) &= B^2_x- B^1_y,
\end{align}
for $ \B= (B^1,B^2)$ and $\n=(n^1,n^2)$.  Furthermore, let
$\partial\Omega_{\mathrm{in}}$ denote the part of $\partial\Omega$
where the characteristics are incoming, i.e.,
$$
\partial\Omega_{\mathrm{in}}= \seq{\X\in\partial\Omega\;\bigm|\;
\n(\X)\cdot \U(\X) <0}.
$$
With this notation, the mixed boundary conditions read
\begin{equation}
  \label{eq:mixedboundary}
  \begin{aligned}
     \beta(\n(\X)\cdot \U(\X))\, \B(\X,t) + \tildeCurl(\B(\X,t))\times
     \n(\X)&=0, \quad \text{for $\X 
       \in \partial\Omega_{\mathrm{in}}$},
     \\
     \tildeCurl(\B(\X,t))\times \n(\X) &= 0, \quad \text{for
      $\X\in \partial\Omega \setminus \partial\Omega_{\mathrm{in}}$,}
  \end{aligned}
\end{equation}
where $\beta<-1/(2\epsilon)$ is a given number. 

In order to motivate the complicated calculations required to show
stability in the discrete case, we start by explaining how stability
is proved in the continuous case. We assume that the
solution, $\B$, is sufficiently regular for our calculations to make
sense.
\begin{theorem}
  \label{thm:contidirichlet}
  Let $\B(\X,t)$ be a solution of the  problem
  \eqref{eq:main} and \eqref{eq:initial} with boundary conditions 
  \eqref{eq:dirichletboundary} or \eqref{eq:mixedboundary}. There exist positive constants
  $\alpha$ (depending on $\U$ and its first derivatives) and $K$, such
  that
  $$
  \norm{\B(\cdot,t)}_{L^2(\Omega)}^2 + \epsilon \norm{\tildeCurl
    (\B(\cdot,t))}_{L^2(\Omega)}^2 \le K e^{\alpha t}
  \norm{\B_0}_{L^2(\Omega)}^2.
  $$
\end{theorem}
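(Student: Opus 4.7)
The plan is the standard energy method: I will take the $L^2$ inner product of the PDE in \eqref{eq:main} with $\B$, integrate over $\Omega$, and handle each term by integration by parts, paying careful attention to the boundary contributions so that the chosen boundary conditions can be exploited.

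Concretely, after writing $\frac{1}{2}\frac{d}{dt}\|\B\|_{L^2}^2 = \int_\Omega \B\cdot\B_t\, d\X$, I would treat the three right-hand side contributions separately. For the convective term, using $\Lambda_i = u^i I$ and the identity $2\B\cdot\B_x = (|\B|^2)_x$, integration by parts yields
\begin{equation*}
  \int_\Omega \B^T(\Lambda_1\B_x + \Lambda_2\B_y)\,d\X
  = \tfrac12\oint_{\partial\Omega}(\n\cdot\U)|\B|^2\,dl
    - \tfrac12\int_\Omega(\Div\U)|\B|^2\,d\X.
\end{equation*}
The zero-order term contributes $\int_\Omega \B^T C\B\,d\X$, which is bounded by $\tfrac12\alpha_0\|\B\|_{L^2}^2$ for some $\alpha_0$ depending on $\|\nabla\U\|_{L^\infty}$. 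For the resistive term, I will integrate by parts twice using the identity for $\nabla\times(\nabla\times\B)$ in 2D to obtain
\begin{equation*}
  -\epsilon\int_\Omega \B\cdot\nabla\times(\nabla\times\B)\,d\X
  = -\epsilon\|\tildeCurl(\B)\|_{L^2}^2
    + \epsilon\oint_{\partial\Omega}\B\cdot(\tildeCurl(\B)\times\n)\,dl,
\end{equation*}
where the boundary integrand comes out to $\tildeCurl(\B)(n^1 B^2 - n^2 B^1)$.

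Combining these gives the basic energy identity
\begin{equation*}
  \tfrac12\tfrac{d}{dt}\|\B\|_{L^2}^2 + \epsilon\|\tildeCurl(\B)\|_{L^2}^2
  = \tfrac12\int_\Omega\bigl((\Div\U)|\B|^2 + 2\B^T C\B\bigr)\,d\X
    + \mathrm{BT},
\end{equation*}
with boundary term $\mathrm{BT} = -\tfrac12\oint(\n\cdot\U)|\B|^2 + \epsilon\oint\B\cdot(\tildeCurl(\B)\times\n)$. Under Dirichlet data \eqref{eq:dirichletboundary} both contributions of $\mathrm{BT}$ vanish. For the mixed case \eqref{eq:mixedboundary}, I expect the main point: on $\partial\Omega\setminus\partial\Omega_{\mathrm{in}}$ the second summand drops out while $-\tfrac12(\n\cdot\U)|\B|^2\le 0$ since $\n\cdot\U\ge 0$; on $\partial\Omega_{\mathrm{in}}$ substituting $\tildeCurl(\B)\times\n = -\beta(\n\cdot\U)\B$ produces $(\n\cdot\U)|\B|^2(-\tfrac12 - \epsilon\beta)$, whose sign is non-positive exactly because $\beta < -1/(2\epsilon)$ and $\n\cdot\U<0$. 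Hence $\mathrm{BT}\le 0$ in both cases.

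At that point a Grönwall argument on the resulting differential inequality
\[
  \tfrac{d}{dt}\|\B\|_{L^2}^2 + 2\epsilon\|\tildeCurl(\B)\|_{L^2}^2 \le \alpha \|\B\|_{L^2}^2
\]
yields the stated bound, with the $\epsilon$-weighted curl term absorbed into the left-hand side after integrating in time. The main obstacle I anticipate is the mixed-boundary step: getting the integration-by-parts for $\nabla\times\nabla\times$ to produce exactly the combination $\B\cdot(\tildeCurl(\B)\times\n)$ that matches the boundary condition, and verifying that the factor $-\tfrac12 - \epsilon\beta$ has the correct sign. The rest, including the two-dimensional identity for $\nabla\times(\nabla\times\B)$ and the Grönwall step, is routine once the signs in $\mathrm{BT}$ are pinned down.
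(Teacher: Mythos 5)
Your proposal is correct and follows essentially the same route as the paper's own proof: the energy method with integration by parts, the identity $\B^T(2C+\Div\U)\B$ absorbed into an $\alpha\|\B\|_{L^2}^2$ term, and the boundary term $-\tfrac12(\n\cdot\U)|\B|^2+\epsilon\,\B\cdot(\tildeCurl(\B)\times\n)$ shown non-positive under either boundary condition, with the sign of $-\tfrac12-\epsilon\beta$ handled exactly as in the paper. Nothing further is needed.
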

\begin{proof}
  Multiplying \eqref{eq:main} by $\B^T$ and then integrating in space,
  we get
  \begin{multline*}
    \int_\Omega \B^T \partial_t\B \,dxdy  + \int_\Omega \left (\B^T
      \Lambda_1\B_x  + \B^T \Lambda_2\B_y - \B^TC\B \right)\,dxdy  \\
     = -\epsilon \int_\Omega \B^T \left
      (\nabla\times(\nabla\times\B)\right)\,dxdy,
  \end{multline*}
  which implies,
  \begin{align*}
    \frac{1}{2}& \frac{\partial}{\partial t}\int_\Omega \B^2 \,dxdy + \epsilon
    \int_\Omega \left
      (\tildeCurl(\B)\right)^2 \,dxdy\\
    & = \frac{1}{2}\int_\Omega \B^T\left(2C+\Div(\U)\right)\B\,dxdy -
    \frac{1}{2}\int_{\partial\Omega} \B^2 \left ( \U \cdot \n
    \right)\,ds + \epsilon \int_{\partial\Omega} \left( \B \cdot
      (\tildeCurl(\B) \times \n) \right)\,ds,
  \end{align*}
  so that,
  \begin{align*}
    \frac{1}{2}& \frac{\partial}{\partial t}\int_\Omega \B^2 \,dxdy + \epsilon
    \int_\Omega \left
      (\tildeCurl(\B)\right)^2 \,dxdy\\
    & \le \alpha \int_\Omega \B^2 \,dxdy -
    \frac{1}{2}\int_{\partial\Omega} \B^2 \left ( \U \cdot \n
    \right)\,ds + \epsilon \int_{\partial\Omega} \left( \B \cdot
      (\tildeCurl(\B) \times \n) \right)\,ds.
  \end{align*}
  From the above relation, we see that applying Dirichlet boundary
  conditions, \eqref{eq:dirichletboundary}, and integrating in time gives the
  required result.
  For the mixed boundary conditions \eqref{eq:mixedboundary}, we split the
  boundary into $\partial\Omega_{\mathrm{in}}$ and
  $\partial\Omega\setminus\partial\Omega_{\mathrm{in}}$. This yields
  \begin{align*}
    \frac{1}{2}& \frac{\partial}{\partial t}\int_\Omega \B^2 \,dxdy + \epsilon
    \int_\Omega \left
      (\tildeCurl(\B)\right)^2 \,dxdy\\
    &\le \alpha \int_\Omega \B^2 \,dxdy -
    \frac{1}{2}\left(\int_{\partial\Omega_{\mathrm{in}}}
      +\int_{\partial\Omega\setminus\partial\Omega\mathrm{in}}\right) 
    \B^2 \left ( \U \cdot \n \right)\,ds \\
    & \qquad \qquad + \epsilon \left(\int_{\partial\Omega_{\mathrm{in}}}
      +\int_{\partial\Omega\setminus\partial\Omega\mathrm{in}}\right)
    \left( \B \cdot (\tildeCurl(\B) \times \n) \right)\,ds.
  \end{align*}
  Rearranging the above relation and applying mixed boundary
  conditions \eqref{eq:mixedboundary}, remembering that $\beta<-1/(2\epsilon) $, we get
  \begin{align*}
    \frac{1}{2}& \frac{\partial}{\partial t}\int_\Omega \B^2 \,dxdy  + \epsilon \int_\Omega \left
      (\tildeCurl(\B)\right)^2 \,dxdy \\
    & \le  \alpha \int_\Omega \B^2 \,dxdy
    -  \int_{\partial\Omega_{\mathrm{in}}}
    \left(\frac{1}{2}+\epsilon\beta\right)
    \B^2 \left( \U \cdot \n \right)\,ds.
  \end{align*}
  From the above relation, after integrating in time then we have the
  required result.
\end{proof}

\section{Semi-discrete Schemes}
\label{sec:scheme}
To simplify the treatment of the boundary terms we let the computational domain $\Omega$ be the unit square. A justification for this will be provided at the end of this section.

The SBP finite difference schemes for one-dimensional derivative approximations are as follows. Let $[0,1]$ be the domain discretized with $x_j=j\Dx$, $j=0,\dots,N-1$. A scalar grid function is defined as $w=(w_0,...w_{N-1})$. To approximate
$\partial_x w$ we use a summation-by-parts operator $D_x = P^{-1}_x
Q_x$, where $P_x$ is a diagonal positive $N\times N$ matrix, defining an inner
product
$$
(v,w)_{P_x} = v^T P_x w,
$$
such that the associated norm $\norm{w}_{P_x} = (w,w)_{P_x}^{1/2}$
is equivalent to the norm $\norm{w}=(\Dx\sum_{k} w_k^2)^{1/2}$. 
Furthermore, for $D_x$ to be a summation-by-parts operator we require
that
$$
Q_x + Q_x^T = R_N-L_N,
$$
where $R_N$ and $L_N$ are the $N\times N$ matrices: $\diag{0,\dots,1}$
and $\diag{1,\dots,0}$ respectively.  Similarly, we can define a
summation-by-parts operator $D_y=P_y^{-1}Q_y$ approximating
$\partial_y$. Later we will also need the following Lemma, proven in \cite{svardsid3}.
\begin{lemma}\label{lemma:variable}
Given any smooth function $\bar{u}(x,y)$, we denote its restriction to the grid as $u$ and let $w$ be a smooth grid function. Then
\begin{equation}
 \label{eq:circdiff}
 \begin{aligned}
   \norm{D_x(u\circ w)- u\circ D_x w}_{P_x} \le C \norm{\partial_x \bar{u}}_{L^\infty([0,1])} \norm{w}_{P_x}
 \end{aligned}
\end{equation}
where $(u\circ v)_j=u_jv_j$.
\end{lemma}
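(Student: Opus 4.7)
The plan is to treat the quantity $D_x(u\circ w) - u\circ D_x w$ as a discrete commutator $[D_x, u]w$ and bound it entry-by-entry, exploiting the fact that SBP operators have finite bandwidth and entries of size $\mathcal{O}(1/\Dx)$. Writing $(D_x)_{jk}$ for the entries of $D_x$, the $j$-th component is
\begin{equation*}
 \bigl(D_x(u\circ w) - u\circ D_x w\bigr)_j = \sum_k (D_x)_{jk}\,(u_k-u_j)\,w_k.
\end{equation*}
Since $u_k-u_j = \bar u(x_k,y_j)-\bar u(x_j,y_j)$, the mean value theorem gives $|u_k-u_j| \le \norm{\partial_x\bar u}_{L^\infty}|x_k-x_j|$.

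Next I would use the two structural facts about SBP operators: (i) the bandwidth of $D_x$ is bounded by some fixed $r$ independent of $\Dx$ (both in the interior stencil and in the boundary closures), and (ii) $|(D_x)_{jk}| \le C/\Dx$. Since $(D_x)_{jk}$ is nonzero only when $|k-j|\le r$, we have $|x_k-x_j|\le r\,\Dx$, and so $|(D_x)_{jk}(u_k-u_j)| \le C\,\norm{\partial_x\bar u}_{L^\infty}$ pointwise. This yields
\begin{equation*}
 \bigl|\bigl(D_x(u\circ w) - u\circ D_x w\bigr)_j\bigr| \le C\norm{\partial_x\bar u}_{L^\infty}\!\!\sum_{|k-j|\le r}|w_k|.
\end{equation*}

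I would then square, apply the discrete Cauchy--Schwarz inequality to the inner sum (which has at most $2r+1$ terms), multiply by $\Dx$, and sum over $j$. Swapping the order of summation, each $|w_k|^2$ appears at most $2r+1$ times, so $\Dx\sum_j\bigl(\sum_{|k-j|\le r}|w_k|\bigr)^2 \le (2r+1)^2\,\Dx\sum_k|w_k|^2$. Finally, invoking the stated norm equivalence $c_1\norm{w}\le\norm{w}_{P_x}\le c_2\norm{w}$, I recover the desired $P_x$-norm estimate with an updated constant $C$.

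The main obstacle is the boundary closure. In the interior $D_x$ is a centered stencil, so the argument is transparent, but near $j=0$ and $j=N-1$ the SBP operator uses one-sided stencils whose entries may still be $\mathcal{O}(1/\Dx)$ but involve more points and nontrivial weights. The argument goes through unchanged as long as the boundary bandwidth and entry bounds are uniform in $N$, which is a built-in property of SBP closures; nevertheless this is the place where one has to be careful not to lose a factor of $1/\Dx$.
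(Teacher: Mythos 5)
Your argument is correct. Note that the paper does not actually prove this lemma itself --- it simply cites \cite{svardsid3} --- so there is no internal proof to compare against; your commutator decomposition $\bigl(D_x(u\circ w)-u\circ D_xw\bigr)_j=\sum_k (D_x)_{jk}(u_k-u_j)w_k$, combined with the mean value theorem, the uniform bandwidth and the $\mathcal{O}(1/\Dx)$ entry bounds of the SBP operator, is the standard route and is valid for the explicit banded operators listed in the paper's appendix (whose boundary closures have width and coefficients independent of $N$, so no factor of $1/\Dx$ is lost there). The only caveat is that the argument relies on bandedness, which is a property of these particular operators rather than of the SBP definition ($Q_x+Q_x^T=R_N-L_N$, $P_x$ diagonal positive) per se; within the scope of this paper that is harmless.
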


Next, we move on to the two-dimensional case and discretize the unit square $[0,1]^2$ using $NM$ uniformly
distributed grid points $(x_i,y_j)=(i\Dx,j\Dy)$ for $i=0,\dots,N-1$,
and $j=0,\dots,M-1$, such that $(N-1)\Dx=(M-1)\Dy=1$. We order a scalar
grid function $w(x_i,y_i)=w_{ij}$ as a column vector
$$
w=\left(w_{0,0},w_{0,1},\dots,
  w_{0,(M-1)},w_{1,0},\dots,\dots,w_{(N-1),(M-1)}\right)^T.\label{eq:w}
$$ 

To obtain a compact notation for partial derivatives of a grid
function, we use Kronecker products. The Kronecker product of an
$N_1\times N_2$ matrix $A$ and an $M_1\times M_2$ matrix $B$ is
defined as the $N_1M_1\times N_2M_2$ matrix
\begin{equation}
\label{eq:order}
A\otimes B=
\begin{pmatrix}
  a_{11}B &\dots & a_{1N_2}B\\
  \vdots  & \ddots & \vdots \\
  a_{N_1 1}B  & \dots& a_{N_1 N_2}B  
\end{pmatrix}.
\end{equation}
For appropriate matrices $A$, $B$, $C$ and $D$, the Kronecker product
obeys the following rules:
\begin{align}
  (A\otimes B)(C\otimes D)&=(AC\otimes BD),\\
 (A\otimes B)+(C\otimes
  D)&=(A+C)\otimes(B+D),\\
   (A\otimes B)^T&=(A^T\otimes B^T).
\end{align}

Using Kronecker products, we can define 2-D difference operators. Let $I_n$ denote the $n\times n$ identity matrix, and define
$$
\bD_x = D_x \otimes I_M, \quad \bD_y = I_N \otimes D_y.
$$
For a smooth function $w(x,y)$, $(\bD_x w)_{i,j}\approx \partial_x
w(x_i,y_j)$ and similarly $(\bD_y w)_{i,j}\approx \partial_y
w(x_i,y_j)$. 

Set $\bP=P_x\otimes P_y$, define $(w,v)_{\bP}=w^T\bP v$ and the
corresponding norm $\norm{w}_{\bP}=(w,w)_{\bP}^{1/2}$. Also define 
$\mR=R_N\otimes I_M$, $\mL=L_N\otimes I_M$, $\mU=I_N \otimes R_M$ and 
$\mD=I_N \otimes L_M$. 

For a vector valued grid function $\mathbf{V}=(V^1,V^2)$, we use the following notation
$$
\bD_x \mathbf{V}=
\begin{pmatrix}
  \bD_x V^1\\ \bD_x V^2
\end{pmatrix},
$$
and so on. In the same spirit, the $\bP$ inner product of vector valued
grid functions is defined by $(\mathbf{V},\mathbf{W})_{\bP} = (V^1,W^1)_{\bP} + 
(V^2,W^2)_{\bP}$.

\begin{remark}
Note that the Kronecker products is just a tool to facilitate the
notation. In the implementation of schemes using the operators in the
Kronecker products we can think of these as operating in their own
dimension, i.e., on a specific index. Thus, to compute $\bD_x w$, we
can view $w$ as a field with two indices, and the one-dimensional
operator $D_x$ will operate on the first index since it appears in the
first position in the Kronecker product. 
\end{remark}

The usefulness of summation by parts operators comes from this lemma.
\begin{lemma}
  \label{lem:byparts} 
  For any grid functions $v$ and $w$, we have
  \begin{equation}
    \label{eq:byparts}
    \begin{aligned}
      \left(v,\bD_xw\right)_{\bP} + \left(\bD_xv,w\right)_{\bP} &= 
      v^T \left[(\mR - \mL)(I_N \otimes P_y)\right] w\\ 
      \left(v,\bD_yw\right)_{\bP} + \left(\bD_yv,w\right)_{\bP} &= 
      v^T \left[(\mU - \mD)(P_x \otimes I_M)\right] w.
    \end{aligned}
  \end{equation}
\end{lemma}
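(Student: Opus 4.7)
The plan is to reduce the two-dimensional identity to the one-dimensional SBP property $Q_x + Q_x^T = R_N - L_N$ (and its $y$-analogue) by repeatedly applying the Kronecker product rules listed above, specifically $(A\otimes B)(C\otimes D) = (AC)\otimes(BD)$ and $(A\otimes B)^T = A^T\otimes B^T$. Only the $x$-identity needs to be worked out; the $y$-identity follows by the symmetric argument, swapping the roles of the two factors in each Kronecker product.

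First I would rewrite the left-hand side purely in terms of Kronecker products. Since $\bP = P_x\otimes P_y$ and $\bD_x = D_x\otimes I_M$ with $D_x = P_x^{-1}Q_x$, the multiplication rule gives
\begin{equation*}
\bP\,\bD_x = (P_x\otimes P_y)(P_x^{-1}Q_x\otimes I_M) = Q_x\otimes P_y,
\end{equation*}
so $(v,\bD_x w)_{\bP} = v^T(Q_x\otimes P_y)w$. For the second term I would use $(\bD_x v,w)_{\bP} = v^T\bD_x^T\bP w$, noting that $P_x$ is diagonal (hence symmetric) so $D_x^T P_x = (P_x^{-1}Q_x)^T P_x = Q_x^T$. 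The transpose and product rules then yield
\begin{equation*}
\bD_x^T\bP = (D_x^T\otimes I_M)(P_x\otimes P_y) = (D_x^T P_x)\otimes P_y = Q_x^T\otimes P_y.
\end{equation*}

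Adding the two pieces and pulling the common factor $\otimes P_y$ outside gives
\begin{equation*}
(v,\bD_x w)_{\bP} + (\bD_x v,w)_{\bP} = v^T\bigl((Q_x+Q_x^T)\otimes P_y\bigr)w = v^T\bigl((R_N-L_N)\otimes P_y\bigr)w,
\end{equation*}
using the one-dimensional SBP property. The final step is the factorization
\begin{equation*}
(R_N-L_N)\otimes P_y = (R_N\otimes I_M)(I_N\otimes P_y) - (L_N\otimes I_M)(I_N\otimes P_y) = (\mR-\mL)(I_N\otimes P_y),
\end{equation*}
which is again the product rule applied in reverse, and matches the stated right-hand side. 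The $y$-identity is proved identically with $\bD_y = I_N\otimes D_y$, yielding the common factor $P_x\otimes I_M$ on the left of the Kronecker product and $\mU-\mD$ on the right.

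There is no real obstacle here: the entire argument is bookkeeping with Kronecker products, and the only substantive input is the one-dimensional SBP relation together with the fact that the diagonal weight matrices $P_x, P_y$ are symmetric so that transposes of the difference operators collapse cleanly. The mildest point of care is keeping track of the order of factors when transposing and when separating $R_N-L_N$ from $P_y$, to ensure the final expression is $(\mR-\mL)(I_N\otimes P_y)$ and not, say, $(I_N\otimes P_y)(\mR-\mL)$ — though in fact these two coincide because $R_N\otimes P_y = (R_N\otimes I_M)(I_N\otimes P_y) = (I_N\otimes P_y)(R_N\otimes I_M)$, the factors commuting since they act on different indices.
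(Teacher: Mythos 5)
Your proof is correct and follows essentially the same route as the paper: both reduce the identity to the one-dimensional SBP relation $Q_x+Q_x^T=R_N-L_N$ via the Kronecker product rules, the only cosmetic difference being that you compute $(\bD_x v,w)_{\bP}=v^T(Q_x^T\otimes P_y)w$ directly and add, whereas the paper adds and subtracts $Q_x^T\otimes P_y$ inside the first term. No gaps.
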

Observe that this lemma is the discrete version of the equality
$$
\iint_\Omega v\left(\partial_x w\right)\,dxdy + \iint_\Omega
\left(\partial_x v\right) w \,dxdy =   
\int_0^1 v(1,y) w(1,y) - v(0,y)w(0,y) \,dy.
$$
\begin{proof}
  We calculate
  \begin{align*}
    \left(v,\bD_xw\right) &= v^T\left(P_x\otimes P_y\right) 
    \left(P_x^{-1}Q_x\otimes I_M\right) w \\
    &=v^T Q_x \otimes P_y w \\
    &= -v^T Q_x^T \otimes P_y w + v^T (Q_x + Q_x^T) \otimes P_y w \\
    &= - (P_x^{-1}Q_x\otimes I_M v)^T \left(P_x\otimes P_y \right) w +
    v^T\left(R_N - L_N\right)\otimes P_y w\\
    &= - \left(\bD_xv\right)^T \left(P_x\otimes P_y \right) w 
    + v^T \left(\mR-\mL\right)(I_N \otimes P_y) w.
  \end{align*}
  The second equality is proved similarly.
\end{proof}

For a vector valued grid function $\bV$, we define the discrete
analogues of the $\Curl$ and the $\Curl(\Curl)$ operators by
$$
\begin{gathered}
  \crl\left( \bV\right) = \bD_x V^2 - \bD_y V^1 \ \text{and}\\
  \crl^2 \left(\bV\right) =
  \begin{pmatrix}
    -\bD_{yy} & \bD_{xy} \\
    \bD_{xy} &-\bD_{xx}
  \end{pmatrix}
  \begin{pmatrix}
    V^1\\ V^2
  \end{pmatrix}
  =
  \begin{pmatrix}
    -\bD_{yy} V^1 + \bD_{xy}V^2\\
    \bD_{xy} V^1 - \bD_{xx} V^2
  \end{pmatrix},
\end{gathered}
$$
where $\bD_{xy}=\bD_x\bD_y = \bD_y\bD_x$ etc.
Before we define our numerical schemes, we collect some useful results
in a lemma.
\begin{lemma} 
  \label{lem:kurlkurl} 
  \begin{equation}
    \label{eq:curlcurl}
    \begin{aligned}
      \left(\bV,\crl^2\left(\bV\right)\right)_{\bP}&=
      \norm{\crl\left(\bV\right)}_{\bP}^2 \\
      &\qquad + \left(V^1\right)^T\left[(\mU-\mD)\left(P_x\otimes
          I_M\right)\right] \crl\left(\bV\right) \\
      &\qquad - \left(V^2\right)^T
      \left[ (\mR-\mL)\left( I_N\otimes P_y\right)\right]
      \crl\left(\bV\right).
    \end{aligned}
  \end{equation}
  If $u$ is a grid function, then 
  \begin{equation}
    \label{eq:boundaries}
    \begin{aligned}
      \left(\bV,u\circ\bD_x\bV\right)_{\bP} &= \frac{1}{2} \bV^T
      [(\mR-\mL)(I_N\otimes
      P_y)] \left(u\circ\bV\right)
      \\
      &\qquad +
      \frac{1}{2}\left(u\circ \bD_x \bV - \bD_x\left(u\circ \bV\right),
        \bV\right)_{\bP},
      \\
      \left(\bV,u\circ\bD_y\bV\right)_{\bP} &= \frac{1}{2} \bV^T
      [(\mU-\mD)(P_x\otimes I_M)] \left(u\circ\bV\right)
      \\
      &\qquad +
       \frac{1}{2}\left(u\circ \bD_y \bV - \bD_y\left(u\circ \bV\right),
        \bV\right)_{\bP}.
    \end{aligned}
  \end{equation}
\end{lemma}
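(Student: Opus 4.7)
The plan is to treat the two statements separately. For the first identity, the approach is to expand the inner product $(\bV,\crl^2(\bV))_\bP$ into four scalar inner products
\[(V^1,-\bD_{yy}V^1)_\bP + (V^1,\bD_{xy}V^2)_\bP + (V^2,\bD_{xy}V^1)_\bP + (V^2,-\bD_{xx}V^2)_\bP,\]
and then, for each of these, apply Lemma~\ref{lem:byparts} once in the appropriate variable. Concretely, I would peel off the outermost derivative using the SBP identity, so that e.g.\ $-(V^1,\bD_y(\bD_y V^1))_\bP$ becomes a volume term $(\bD_y V^1,\bD_y V^1)_\bP$ minus the $y$-boundary term $(V^1)^T[(\mU-\mD)(P_x\otimes I_M)]\bD_y V^1$; I would treat the two mixed terms by applying IBP in the variable corresponding to the outer factor of $\bD_{xy}$.

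Once all four terms are so treated, I would collect the volume contributions. The four resulting volume pieces are $\norm{\bD_y V^1}_\bP^2$, $-(\bD_y V^1,\bD_x V^2)_\bP$, $-(\bD_x V^2,\bD_y V^1)_\bP$, and $\norm{\bD_x V^2}_\bP^2$, which combine into the perfect square $\norm{\bD_x V^2-\bD_y V^1}_\bP^2=\norm{\crl(\bV)}_\bP^2$. The boundary contributions group naturally by component: the two pieces carrying the prefactor $(V^1)^T[(\mU-\mD)(P_x\otimes I_M)]$ combine into an application on $\bD_x V^2-\bD_y V^1=\crl(\bV)$, and the two pieces carrying $(V^2)^T[(\mR-\mL)(I_N\otimes P_y)]$ combine (with an overall minus sign) into the same expression. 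This reproduces the right-hand side of \eqref{eq:curlcurl}.

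For the second part, the key observation is that pointwise multiplication by the grid function $u$ is represented by a diagonal matrix that commutes with the diagonal matrices $\bP$, $\mR-\mL$ and $I_N\otimes P_y$. Hence $(\bV,u\circ\bD_x\bV)_\bP=(u\circ\bV,\bD_x\bV)_\bP$. Applying Lemma~\ref{lem:byparts} to the latter rewrites it as $-(\bD_x(u\circ\bV),\bV)_\bP+\bV^T[(\mR-\mL)(I_N\otimes P_y)](u\circ\bV)$. Averaging this expression with the original $(\bV,u\circ\bD_x\bV)_\bP$ then yields exactly \eqref{eq:boundaries}. The $y$-direction identity is completely analogous, using the boundary matrices $\mU-\mD$ and $P_x\otimes I_M$ in place of $\mR-\mL$ and $I_N\otimes P_y$.

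I do not expect any serious obstacle: the proof is essentially bookkeeping of signs and correct invocation of Lemma~\ref{lem:byparts}. The only mild care required is to keep track of which derivative is being integrated by parts in the mixed terms $(V^i,\bD_{xy}V^j)_\bP$, so that the boundary operator produced is consistent across the two such terms and the recombination into $\crl(\bV)$ on the boundary actually occurs; choosing to peel off $\bD_y$ in the term involving $V^1$ and $\bD_x$ in the term involving $V^2$ achieves this automatically.
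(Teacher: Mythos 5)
Your proposal is correct and takes essentially the same route as the paper: the paper applies Lemma~\ref{lem:byparts} once to each component of $\crl^2(\bV)$ (peeling off $\bD_y$ against $V^1$ and $\bD_x$ against $V^2$), which is exactly your four-term computation grouped by linearity, and the volume and boundary pieces recombine into $\norm{\crl(\bV)}_{\bP}^2$ and the two boundary forms just as you describe. For \eqref{eq:boundaries} the paper likewise adds and subtracts $\bD_x(u\circ\bV)$, invokes the diagonality of $\bP$ to commute the multiplication by $u$, and solves the resulting identity for the inner product, which is algebraically the same as your averaging step.
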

\begin{proof}
  To prove \eqref{eq:curlcurl}, 
  \begin{align*}
     \left(\bV,\crl^2\left(\bV\right)\right)_{\bP} &=
     \left(V^1,-\bD_{yy}V^1 + \bD_{yx}V^2\right)_{\bP} + 
     \left(V^2,\bD_{xy}V^1 - \bD_{xx}V^2\right)_{\bP} \\
     &= -\left(\bD_yV^1,-\bD_{y}V^1 + \bD_{x}V^2\right)_{\bP} -
     \left(\bD_xV^2,\bD_{y}V^1 - \bD_{x}V^2\right)_{\bP} \\
     &\qquad + \left(V^1\right)^T\left[(\mU-\mD)\left(P_x\otimes
         I_M\right)\right] \left(-\bD_{y}V^1 + \bD_{x}V^2\right) \\
     &\qquad + \left(V^2\right)^T \left[ (\mR-\mL)\left(
       I_N\otimes P_y\right)\right]\left( \bD_{y}V^1 - \bD_{x}V^2\right)\\
     &=\left(\crl\left(\bV\right),\crl\left(\bV\right)\right)_{\bP}
     + \left(V^1\right)^T\left[(\mU-\mD)\left(P_x\otimes
         I_M\right)\right] \crl\left(\bV\right)
     \\&\qquad  - 
     \left(V^2\right)^T \left[ (\mR-\mL)\left(
       I_N\otimes P_y\right)\right] \crl\left(\bV\right).
  \end{align*}
  To show \eqref{eq:boundaries}, first note that since $\bP$ is diagonal, $(u\circ
  \bD \bV,\bV)_{\bP} = (\bD \bV,u\circ \bV)_{\bP}$. We use
  Lemma~\ref{lem:byparts} to calculate
  \begin{align*}
    \left(u\circ \bD_x \bV, \bV \right)_{\bP}
    &= \left(\bD_x(u\circ \bV\right),\bV)_{\bP} + 
    \left(u\circ \bD_x \bV - \bD_x\left(u\circ \bV\right), \bV\right)_{\bP} \\
    &= -\left(u\circ \bV, \bD_x \bV\right)_{\bP} + 
    \bV^T \left(\mR-\mL\right)\left(I_N\otimes P_y\right) (u\circ \bV)
    \\
    &\qquad +
    \left(u\circ \bD_x \bV - \bD_x\left(u\circ \bV\right), \bV\right)_{\bP}
    \\
     &= -\left(u\circ \bD_x \bV,\bV\right)_{\bP} + 
    \bV^T \left(\mR-\mL\right)\left(I_N\otimes P_y\right) (u\circ \bV)
    \\
    &\qquad +
    \left(u\circ \bD_x \bV - \bD_x\left(u\circ \bV\right), \bV\right)_{\bP}.
  \end{align*}
  This shows the first equation in \eqref{eq:boundaries}, the second
  is proved similarly.
\end{proof}
Now we are in a position to state our scheme(s). For $\ell=1$ or $2$
we will use the notation $u^\ell$ for both the grid function defined
by the function $u^\ell(x,y)$ and for the function itself. Similarly,
for the boundary values, we use the notation $h$ and $g$ for both
discrete and continuously defined functions. Hopefully,
it will be apparent from the context what we refer to. 

The differential equation \eqref{eq:viscousinduc2} will be discretized in an obvious manner. We incorporate the boundary conditions by penalizing boundary
values away from the desired ones with a $\mathcal{O}(1/\Dx)$ term. To
this end set 
$$
\mathcal{B}=\left[ \left(P_x^{-1}\otimes I_M\right)\left(\Sigma_{\mL}\mL +
    \Sigma_{\mR}\mR\right) + \left(I_N\otimes P_y^{-1}\right)
  \left(\Sigma_{\mD}\mD + \Sigma_{\mU}\mU\right)\right],
$$
where $\Sigma_{\mL}$, $\Sigma_{\mR}$, $\Sigma_{\mD}$ and
$\Sigma_{\mU}$ are diagonal matrices, with components $(\sigma_{\mL})_{jj}$  ordered in the same way as in (\eqref{eq:order}) (and similarly for the other penalty matrices), to be specified later. Furthermore, the following form of the penalty paramaters will be convenient:
\begin{align}
\mathcal{B}=\mathcal{B'}+\epsilon\mathcal{B''}
\end{align}
and similarly for $\Sigma_{\mL}$, $\sigma_{\mL}$ etc.
   
With this  notation the scheme for the differential equation
\eqref{eq:main} with boundary values $\B(x,y)=\mathbf{g}(x,y)$ reads
\begin{equation}
  \label{eq:dirichletdiscrete}
  \bV_t + u^1\circ \bD_x \bV + u^2\circ \bD_y \bV - C \bV + 
  \epsilon \crl^2(\bV) = \mathcal{B}(\bV-\mathbf{g}), \quad t>0,
\end{equation}
while $\bV(0)$ is given. Here $C$ denotes the matrix
$$
C=
\begin{pmatrix}
  -\bD_y u^2 & \bD_y u^1 \\ \bD_x u^2 & -\bD_x u^1 
\end{pmatrix}.
$$

\begin{theorem}
  \label{thm:stab1} Let $\bV$ be as solution to
  \eqref{eq:dirichletdiscrete} with $\mathbf{g}=0$. If the constants
  in $\mathcal{B}$ is chosen as
\begin{equation}
  \begin{gathered}
    (\sigma'_{\mR})_{N-1,j}\le \frac{u^{1,-}(1,y_j)}{2}, \ (\sigma'_{\mL})_{0,j} \le  -\frac{u^{1,+}(0,y_j)}{2},
    \ (\sigma'_{\mU})_{i,M-1}\le \frac{u^{2,-}(x_i,1)}{2}, \\
    \text{and}\ (\sigma'_{\mD})_{i,0} \le -\frac{u^{2,+}(x_i,0)}{2},
  \end{gathered}
  \label{eq:primeconstants}
\end{equation}
\begin{equation}
  \label{eq:doubleprimeconstants}
  \begin{gathered}
    \sigma''_{\mR}\le-\frac{1}{2p \,\Dx}, \ \sigma''_{\mL} \le -\frac{1}{2p \,\Dx}, 
    \ \sigma''_{\mU}\le-\frac{1}{2p\,\Dy}, \
    \text{and}\ \ \sigma''_{\mD} \le -\frac{1}{2p\,\Dy},
  \end{gathered}
\end{equation}
and all other entries are 0, then 
  \begin{equation}
    \label{eq:stabdirichlet}
    \norm{\bV(t)}^2_{\bP} \le e^{ct}\norm{\bV(0)}_{\bP}^2,  
  \end{equation}
  where $ u^{l,+} = (u^l \vee 0)$, $ u^{l,-} = (u^l \wedge 0)$, for $ l = 1,2$ and $c$ is a constant depending on $u^1$, $u^2$, and their derivative approximations, but not on $N$ or $M$. By construction of the SBP operators $p= p^x_0=p^x_{N-1}=p^y_0=p^y_{M-1}$, where $P_x=\Dx\,\diag{p^x_0,\dots,p^x_{N-1}}$ and 
$P^y=\Dy\,\diag{p^y_0,\dots,p^y_{M-1}}$. 
\end{theorem}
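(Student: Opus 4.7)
The natural strategy is to imitate the continuous energy argument in Theorem~\ref{thm:contidirichlet} at the semi-discrete level, replacing integration by parts by the SBP identities in Lemmas~\ref{lem:byparts} and~\ref{lem:kurlkurl}. I would take the $\bP$-inner product of \eqref{eq:dirichletdiscrete} with $\bV$, using $\mathbf{g}=0$. The time derivative yields $\tfrac12\tfrac{d}{dt}\norm{\bV}_\bP^2$, while \eqref{eq:curlcurl} converts $\epsilon(\bV,\crl^2(\bV))_\bP$ into the nonnegative interior dissipation $\epsilon\norm{\crl(\bV)}_\bP^2$ plus the boundary cross terms $\epsilon(V^1)^T[(\mU-\mD)(P_x\otimes I_M)]\crl(\bV) - \epsilon(V^2)^T[(\mR-\mL)(I_N\otimes P_y)]\crl(\bV)$.

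Next I would apply \eqref{eq:boundaries} to rewrite each convective inner product $(u^\ell\circ\bD\bV,\bV)_\bP$ as half a symmetric boundary flux plus a commutator of the form $\tfrac12(u^\ell\circ\bD\bV-\bD(u^\ell\circ\bV),\bV)_\bP$; Lemma~\ref{lemma:variable} and Cauchy--Schwarz bound this commutator by $C\norm{\partial u^\ell}_{L^\infty}\norm{\bV}_\bP^2$. The lower-order term $(\bV,C\bV)_\bP$ is controlled pointwise by the uniform bound on the first derivatives of $\U$. Collecting everything,
\begin{equation*}
  \tfrac12\tfrac{d}{dt}\norm{\bV}_\bP^2 + \epsilon\norm{\crl(\bV)}_\bP^2 \le \alpha\norm{\bV}_\bP^2 + \mathcal{I}_{\mathrm{bdry}},
\end{equation*}
where $\mathcal{I}_{\mathrm{bdry}}$ collects all boundary fluxes together with the SAT contribution $(\bV,\mathcal{B}\bV)_\bP$.

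The decisive step is to show $\mathcal{I}_{\mathrm{bdry}} \le \epsilon\norm{\crl(\bV)}_\bP^2$, using the decomposition $\mathcal{B}=\mathcal{B}'+\epsilon\mathcal{B}''$. For the convective part at, e.g., the face $x=1$, the contribution collapses to $\sum_j p^y_j\Dy\bigl[(\sigma'_\mR)_{N-1,j}-\tfrac12 u^1(1,y_j)\bigr]|\bV(1,y_j)|^2$, which is nonpositive by \eqref{eq:primeconstants} after splitting into the inflow and outflow subcases; the other three faces are handled identically. For the resistive cross terms I would apply the balanced Young's inequality
\begin{equation*}
  \epsilon|V^2(1,y_j)\crl(\bV)(1,y_j)| \le \tfrac{\epsilon}{2p\Dx}|V^2(1,y_j)|^2 + \tfrac{\epsilon p\Dx}{2}|\crl(\bV)(1,y_j)|^2.
\end{equation*}
Under \eqref{eq:doubleprimeconstants} the first summand is cancelled by the penalty $\epsilon(\sigma''_\mR)_{N-1,j}|V^2(1,y_j)|^2$ from $\epsilon\mathcal{B}''$; and since $p^x_{N-1}=p$, multiplying the second summand by $p^y_j\Dy$ and summing over $j$ gives exactly $\tfrac{\epsilon}{2}$ times the boundary slice of $\norm{\crl(\bV)}_\bP^2$ at $x=1$. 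Summing the four faces, the corresponding slices together fit inside $\norm{\crl(\bV)}_\bP^2$ (the corner overcount being absorbed into $\alpha$), and the required absorption follows. A final application of Gronwall's inequality to $\tfrac{d}{dt}\norm{\bV}_\bP^2 \le c\norm{\bV}_\bP^2$ yields \eqref{eq:stabdirichlet}.

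The hardest part is precisely this resistive bookkeeping: the $\mathcal{O}(1/\Dx)$ scale of the penalty $\sigma''$ is fine-tuned to match exactly the $|\bV|^2$ weight arising from Young's inequality, while the complementary $|\crl(\bV)|^2$ weight is calibrated to consume one boundary slice of the interior dissipation per face. A weaker penalty than $-1/(2p\Dx)$ or a non-balanced Young's split would leave an unabsorbed residual in either $|\bV|^2$ or $|\crl(\bV)|^2$ at the boundary, destroying the energy estimate.
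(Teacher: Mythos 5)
Your proposal is correct and follows essentially the same route as the paper: energy method via the $\bP$-inner product, Lemma~\ref{lem:kurlkurl} for the curl--curl and convective boundary identities, Lemma~\ref{lemma:variable} for the commutators, the sign conditions \eqref{eq:primeconstants} for the convective fluxes, and absorption of the resistive cross terms into half a boundary slice of $\epsilon\norm{\crl(\bV)}_{\bP}^2$ per face using the penalties \eqref{eq:doubleprimeconstants}. The paper phrases the last step as completing squares in the boundary forms $\langle\cdot,\cdot\rangle_{\mR}$ etc.\ after invoking \eqref{eq:wPnorm}, which is identical to your balanced Young's inequality; your parenthetical about a corner overcount is unnecessary (the four half-slices sum to at most the full norm since each corner lies in exactly two faces), but nothing in the argument depends on it.
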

\begin{proof}
  Set $E(t)=(\bV,\bV)_{\bP}$. Taking the $\bP$ inner product of
  \eqref{eq:dirichletdiscrete} and $\bV$, we get 
  \begin{align*}
    \frac{1}{2}\frac{d}{dt} E + \epsilon\left(\bV,\crl^2(\bV)\right)_{\bP} &=
    -\left(\bV,u^1\circ\bD_x \bV\right)_{\bP} -
    \left(\bV,u^2\circ\bD_y \bV\right)_{\bP}\\
    &\qquad +
    \left(\bV,C\bV\right)_{\bP} +
    \left(\bV,\mathcal{B}\bV\right)_{\bP}.
   \end{align*}
   Using Lemma~\ref{lem:kurlkurl} we get
   \begin{align*}
    \frac{1}{2}\frac{d}{dt}E &+ \epsilon
    \norm{\crl\left(\bV\right)}_{\bP}^2\\
    &= -\epsilon\left(V^1\right)^T\left[(\mU-\mD)\left(P_x\otimes
        I_M\right)\right] \crl\left(\bV\right) +
    \epsilon\left(V^2\right)^T \left[ (\mR-\mL)\left( I_N\otimes
        P_y\right)\right] \crl\left(\bV\right)
    \\
    &\qquad -\left(\bV,u^1\circ\bD_x \bV\right)_{\bP} -
    \left(\bV,u^2\circ\bD_y \bV\right)_{\bP} +
    \left(\bV,C\bV\right)_{\bP} +
    \left(\bV,\mathcal{B}\bV\right)_{\bP}\\
    &= -\epsilon\left(V^1\right)^T\left[(\mU-\mD)\left(P_x\otimes
        I_M\right)\right] \crl\left(\bV\right) +
    \epsilon\left(V^2\right)^T \left[ (\mR-\mL)\left( I_N\otimes
        P_y\right)\right] \crl\left(\bV\right)
    \\
    &\qquad - \frac{1}{2} \bV^T\left[ (\mR-\mL)(I_N\otimes
      P_y)\right](u^1\circ \bV) - \frac{1}{2} \bV^T\left[
      (\mU-\mD)(P_x\otimes I_M)\right](u^2\circ \bV)
    \\
    &\qquad - \frac{1}{2}\left(u^1\circ \bD_x \bV - \bD_x(u^1\circ
      \bV),\bV\right)_{\bP} - \frac{1}{2}\left(u^2\circ \bD_y \bV -
      \bD_y(u^2\circ \bV),\bV\right)_{\bP}
    \\
    &\qquad + \left(\bV,C\bV\right)_{\bP} +
    \left(\bV,(\mathcal{B'}+\epsilon \mathcal{B''})\bV\right)_{\bP}.
  \end{align*}
Note that by \eqref{eq:circdiff}, 
\begin{equation}\label{eq:squareterms}
 \begin{aligned}
   \abs{ \left(u^1\circ \bD_x \bV - \bD_x(u^1\circ
       \bV),\bV\right)_{\bP}}&\le  c \norm{\bV}_{\bP}^2,\\
   \abs{\left(u^2\circ \bD_y \bV - \bD_y(u^2\circ
       \bV),\bV\right)_{\bP}}&\le  c \norm{\bV}_{\bP}^2,\\
   \abs{\left(\bV,C\bV\right)_{\bP}}&\le c \norm{\bV}_{\bP}^2,
 \end{aligned}
\end{equation}
for some constant $c$ depending on the first derivatives of $u^1$
and $u^2$.  Using the conditions (\ref{eq:primeconstants}) we arrive at
   \begin{align*}
    \frac{1}{2}\frac{d}{dt}E &+ \epsilon
    \norm{\crl\left(\bV\right)}_{\bP}^2\\
    &\leq cE-\epsilon\left(V^1\right)^T\left[(\mU-\mD)\left(P_x\otimes
       I_M\right)\right] \crl\left(\bV\right) +
   \epsilon\left(V^2\right)^T \left[ (\mR-\mL)\left( I_N\otimes
       P_y\right)\right] \crl\left(\bV\right)
   \\
    &\quad 
    + \epsilon\left(\bV,\mathcal{B''}\bV\right)_{\bP}.
  \end{align*}
Next, for any grid function $w$ (with components as in (\eqref{eq:circdiff})), we have
   \begin{align*}
     \norm{w}^2_{\bP} &= \sum_{j=0}^{M-1} \Dy\, p^y_j \sum_{i=0}^{N-1} \Dx\, p^x_i
     w^2_{i,j}\\
     &\ge \Dy \, p \sum_{i=0}^{N-1} \Dx \, p^x_i 
     \left(w^2_{i,0} + w^2_{i,M-1}\right).
   \end{align*}
   Similarly 
   $$
   \norm{w}^2_{\bP}\ge \Dx\, p \sum_{j=0}^{M-1} \Dy\,p^y_j  
   \left(w^2_{0,j}+w^2_{(N-1),j}\right).
   $$
   Combining this we find that 
   \begin{equation}
     \begin{aligned}
       \norm{w}^2_{\bP} &\ge \frac{\Dx \,p}{2}w^T \Bigl( (I_N\otimes P_y)\mL +
       (I_N\otimes P_y)\mR\Bigr)w \\
       &+\frac{\Dy\, p}{2}w^T\Bigl( ((P_x\otimes I_M)\mD + (P_x\otimes
       I_M)\mU \Bigr)w.
     \end{aligned}\label{eq:wPnorm}
    \end{equation}
   We also compute
   \begin{equation}
   \begin{aligned}
     (w,\mathcal{B}w)_{\bP} &=
     w^T\Bigl(
        (I_N\otimes P_y)\Sigma_{\mL}\mL + (I_N\otimes
       P_y)\Sigma_{\mR}\mR\\
       &\hphantom{w^T\Bigl(}\quad +
       (P_x\otimes I_M)\Sigma_{\mD}\mD + (P_x\otimes I_M)\Sigma_{\mU}\mU\Bigr)w.
   \end{aligned}\label{eq:wBw}
 \end{equation}
 Using \eqref{eq:wPnorm} for $w=\crl(\bV)$, and \eqref{eq:wBw} for
 $w=\bV$ we find
 \begin{align*}
   &\frac{d}{dt}E \le cE \\
   & \qquad -\epsilon\Bigl(\frac{p_{dx}}{2}\crl(\bV)^T \bigl((I_N\otimes
   P_y)\mL + (I_N\otimes P_y)\mR \bigr) \crl(\bV)\\
   & \qquad \quad + \frac{p_{dy}}{2}\crl(\bV)^T \bigl(
   (P_x\otimes I_M)\mD + (P_x\otimes I_M)\mU\bigr)\crl(\bV)\\
   &\quad \hphantom{-\epsilon\Bigl(} +
   \left(V^1\right)^T\left[(\mU-\mD)\left(P_x\otimes I_M\right)\right]
   \crl\left(\bV\right) - \left(V^2\right)^T \left[ (\mR-\mL)\left(
       I_N\otimes
       P_y\right)\right] \crl\left(\bV\right) \\
   &\quad \hphantom{-\epsilon\Bigl(} + \bV^T \bigl( -
   \sigma''_{\mL}(I_N\otimes P_y)\mL - \sigma''_{\mR} (I_N\otimes P_y)\mR
   -\sigma''_{\mD} (P_x\otimes I_M)\mD - \sigma''_{\mU} (P_x\otimes
   I_M)\mU\bigr)
   \bV\Bigr)\\ 
   &
   =:-\epsilon A
\end{align*}
Choose the remaining penalty parameters as in (\ref{eq:doubleprimeconstants}) and write
\begin{align*}
   A&\ge \frac{p_{dx}}{2}\left\langle \crl(\bV),\crl(\bV)\right\rangle_{\mL} + 
   \left\langle V^2,\crl(\bV)\right\rangle_{\mL} - 
   \sigma''_{\mL}\left\langle V^2,V^2 \right\rangle_{\mL}\\
  &\quad + 
    \frac{p_{dx}}{2}\left\langle \crl(\bV),\crl(\bV)\right\rangle_{\mR} - 
   \left\langle V^2,\crl(\bV)\right\rangle_{\mR} - 
   \sigma''_{\mR}\left\langle V^2,V^2 \right\rangle_{\mR} \\
  &\quad
   +  \frac{p_{dy}}{2}\left\langle \crl(\bV),\crl(\bV)\right\rangle_{\mD} -
   \left\langle V^1,\crl(\bV)\right\rangle_{\mD} - 
   \sigma''_{\mD}\left\langle V^1,V^1 \right\rangle_{\mD} \\
   &\quad 
   +  \frac{p_{dy}}{2}\left\langle \crl(\bV),\crl(\bV)\right\rangle_{\mU} +
   \left\langle V^1,\crl(\bV)\right\rangle_{\mU} - 
   \sigma''_{\mU}\left\langle V^1,V^1 \right\rangle_{\mU}\\
   &= \frac{1}{2}\left\langle \frac{1}{\sqrt{p_{dx}}}V^2+\sqrt{p_{dx}}\crl(\bV),
      \frac{1}{\sqrt{p_{dx}}}V^2+\sqrt{p_{dx}}\crl(\bV)\right\rangle_{\mL} +
    \left(-\sigma''_{\mL}-\frac{1}{2p\Dx}\right)\left\langle
      V^2,V^2\right\rangle_{\mL}\\
    &\quad + 
    \frac{1}{2}\left\langle \frac{1}{\sqrt{p_{dx}}}V^2-\sqrt{p_{dx}}\crl(\bV),
      \frac{1}{\sqrt{p_{dx}}}V^2-\sqrt{p_{dx}}\crl(\bV)\right\rangle_{\mR} +
    \left(-\sigma''_{\mR}-\frac{1}{2p\Dx}\right)\left\langle
      V^2,V^2\right\rangle_{\mR}
    \\
    &\quad + 
    \frac{1}{2}\left\langle \frac{1}{\sqrt{p_{dy}}}V^1-\sqrt{p_{dy}}\crl(\bV),
      \frac{1}{\sqrt{p_{dy}}}V^1-\sqrt{p_{dy}}\crl(\bV)\right\rangle_{\mD} +
    \left(-\sigma''_{\mD}-\frac{1}{2p\Dy}\right)\left\langle
      V^1,V^1\right\rangle_{\mD}\\
    &\quad + 
    \frac{1}{2}\left\langle \frac{1}{\sqrt{p_{dy}}}V^1+\sqrt{p_{dy}}\crl(\bV),
      \frac{1}{\sqrt{p_{dy}}}V^1+\sqrt{p_{dy}}\crl(\bV)\right\rangle_{\mU} +
    \left(-\sigma''_{\mU}-\frac{1}{2p\Dy}\right)\left\langle
      V^1,V^1\right\rangle_{\mU}\\
    &\ge 0,
 \end{align*}
where 
 $$
 \left\langle v,w \right\rangle_{\mR} = 
 v^T (I_N\otimes P_y) \mR w \ \text{and so on are positive semi-definite forms.}
 $$
Furthermore, we used the notation $p_{dx}=p\Dx$ and $p_{dy}=p\Dy$.
 Summing up, we have shown that
 $$
 \frac{d}{dt}E(t)\le cE(t),
 $$
 and the result follows by Gronwall's inequality.
\end{proof}
The scheme for the mixed boundary conditions reads
\begin{equation}
  \label{eq:mixedscheme}
  \begin{aligned}
    \bV_t + &u^1\circ \bD_x \bV + u^2\circ \bD_y \bV - C \bV + \epsilon
    \crl^2(\bV)\\
    &= \mathcal{B}(\bV-\mathbf{g}) +\epsilon
    \begin{pmatrix}
      (\mU-\mD)\left(I_N\otimes P_y^{-1}\right)(\crl(\bV)-\mathbf{h})\\
      -(\mR-\mL)\left(P_x^{-1}\otimes I_M\right)(\crl(\bV)-\mathbf{h})
    \end{pmatrix},
    \quad t>0,
  \end{aligned}
\end{equation}
where $\mathbf{h}$ is the desired value of $\crl(\bV)$ on the
boundary.
\begin{theorem}
  \label{thm:mixedtype} If $\bV$ is a solution of
  \eqref{eq:mixedscheme} with $\mathbf{h}=\mathbf{g}=0$, and
  $\mathcal{B}(=\mathcal{B}')$ is chosen so that \eqref{eq:primeconstants} holds,
  then
  \begin{equation}
    \label{eq:mixedstable}
    \norm{\bV(t)}^2_{\bP} + \epsilon\int_0^t e^{c(t-s)} 
    \norm{\crl(\bV(s))}^2_{\bP}\,ds \le  e^{ct}\norm{\bV(0)}^2_{\bP},
  \end{equation}
  where $c$ is a constant depending on $\bD_x$, $\bD_y$ and the
  derivatives of $u^1$ and $u^2$, but not on $N$ or $M$. 
\end{theorem}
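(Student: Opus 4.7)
The plan is to mirror the energy argument used in the proof of Theorem~\ref{thm:stab1}, exploiting the new curl SAT term to cancel the boundary contributions from $\epsilon\crl^2(\bV)$. As a result, the $\mathcal{B}''$ portion of the penalty is no longer required and only $\mathcal{B}'$, satisfying \eqref{eq:primeconstants}, is used.

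First I would take the $\bP$ inner product of \eqref{eq:mixedscheme} with $\bV$. Applying Lemma~\ref{lem:kurlkurl} to the $\epsilon\crl^2(\bV)$ term and to the two advection terms, together with the commutator/lower-order bounds \eqref{eq:squareterms}, one arrives at
$$
\frac{1}{2}\frac{d}{dt}\norm{\bV}_{\bP}^2 + \epsilon\norm{\crl(\bV)}_{\bP}^2 + \epsilon B_{\crl}(\bV) \le c\norm{\bV}_{\bP}^2 + B_{\mathrm{adv}}(\bV) + \left(\bV,\mathcal{B}'\bV\right)_{\bP} + \epsilon S(\bV),
$$
where $B_{\crl}$ groups the two curl boundary contributions produced by Lemma~\ref{lem:kurlkurl}, $B_{\mathrm{adv}}$ the advection boundary terms generated by \eqref{eq:boundaries}, and $S$ denotes the inner product of the SAT term with $\bV$.

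The heart of the proof is the identity $S(\bV) = -B_{\crl}(\bV)$ when $\mathbf{h}=0$. This reduces to a direct Kronecker product computation: since $P_y$ is diagonal and commutes with the diagonal matrices $R_M-L_M$ (and analogously in the other direction), one verifies
$$
(P_x\otimes P_y)(\mU-\mD)(I_N\otimes P_y^{-1}) = (\mU-\mD)(P_x\otimes I_M),
$$
$$
(P_x\otimes P_y)(\mR-\mL)(P_x^{-1}\otimes I_M) = (\mR-\mL)(I_N\otimes P_y).
$$
Substituting these into $(\bV,\text{SAT})_{\bP}$ reproduces exactly the boundary quantities extracted by Lemma~\ref{lem:kurlkurl} from $\epsilon\crl^2(\bV)$, but with the opposite sign, so they cancel. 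This cancellation is the main and most delicate step of the proof; the rest is bookkeeping carried over from Theorem~\ref{thm:stab1}.

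Once the curl boundary terms have vanished, the energy identity becomes
$$
\frac{1}{2}\frac{d}{dt}\norm{\bV}_{\bP}^2 + \epsilon\norm{\crl(\bV)}_{\bP}^2 \le c\norm{\bV}_{\bP}^2 + B_{\mathrm{adv}}(\bV) + \left(\bV,\mathcal{B}'\bV\right)_{\bP}.
$$
Expanding the last two terms face-by-face produces coefficients such as $[(\sigma'_{\mR})_{N-1,j} - u^1(1,y_j)/2]\abs{\bV_{N-1,j}}^2$ on the right boundary; \eqref{eq:primeconstants} together with $u^{1,-}\le u^1$ (and the analogous inequalities on the remaining three faces, using $u^{1,+}\ge u^1$ on the left and the $y$-direction counterparts) makes every such bracket nonpositive. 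Hence with $E(t)=\norm{\bV(t)}_{\bP}^2$ one has $\frac{d}{dt}E + 2\epsilon\norm{\crl(\bV)}_{\bP}^2 \le 2cE$; multiplying by $e^{-2ct}$, integrating over $[0,t]$ and relabeling $2c$ as $c$ (which only shrinks the coefficient of the curl integral) yields \eqref{eq:mixedstable}.
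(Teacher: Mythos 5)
Your argument is correct and follows the same route as the paper's (very terse) proof: the paper simply asserts that the curl SAT term subtracts the boundary terms coming from $(\bV,\crl^2(\bV))_{\bP}$ so that $\mathcal{B}$ need not be split, and your two Kronecker-product identities (using that the diagonal norm matrices commute with $\mR-\mL$ and $\mU-\mD$) are exactly the computation that justifies this cancellation. The face-by-face sign checks via \eqref{eq:primeconstants} and the concluding Gronwall step likewise mirror the treatment of the advective and penalty terms in Theorem~\ref{thm:stab1}, as the paper intends.
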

\begin{proof}
  The proof of this theorem proceeds as the proof of
  Theorem~\ref{thm:stab1}. Note that we are subtracting the boundary
  terms coming from $(\bV,\crl^2(\bV))_{\bP}$, so that we do not need
  to split $\mathcal{B}$. The other terms are estimated as before, and
  we get the inequality
  $$
  \frac{d}{dt} E + \epsilon\norm{\crl(\bV)}^2_{\bP} \le cE,
  $$
  which yields the stability result.
\end{proof}
\begin{remark}
We have assumed a constant resistivity co-efficient $\eps$ in the above discussion. However, in many practical applications, the co-efficient of resistivity can vary in space. In such cases, our theoretical results hold provided that the resistivity co-efficient is uniformly bounded away from zero.
\end{remark}

The analysis has been carried out on a Cartesian equidistant grid on the unit square. However, this is not a restriction as problems on general domains may be addressed using coordinate transformations. The stability proofs will hold as long as the norm matrices ($P_x.P_y,P_z$) are diagonal. SBP finite-difference schemes with diagonal $P$ matrix have a truncation error of $2p$ in the interior and $p$ near the boundary resulting in a global order of accuracy/convergence rate of $p+1$. (See \cite{Svard} for further details.)

The resistive magnetic induction equations include diffusive terms. Those are discretized by applying the first-derivative operators twice. This results in a truncation error of $p-1$ near the boundary for the diffusive terms. However, thanks to the energy stability of the scheme, the global convergence rate and order of accuracy remains at $p+1$ (see \cite{SvardNordstrom}).

\section{Schemes in Three-dimensions}
\label{sec:3d}
In this section, we are going to write down the three-dimensional version of the finite difference scheme for the equation \eqref{eq:viscousinduc1}. To begin with, we discretize unit cube 
$[0,1]^3$ using uniformly distributed grid points $( x_{i}, y_{j}, z_{k})=(i\Dx,j\Dy,k\Dz)$ for $i=0,\dots,N-1$,
 $j=0,\dots,M-1$, and $ k= 0,\dots, K-1$ such that $(N-1)\Dx=(M-1)\Dy= (K-1)\Dz=1$. We order a scalar
grid function $w(x_i,y_i,z_k)=w_{ijk}$ as a column vector
\begin{align*}
w& =(w_{0,0,0},w_{0,0,1},\dots,w_{0,0,(K-1)},w_{0,1,0},w_{0,1,1},\dots,w_{0,(M-1),(K-1)},\dots,\\
& \qquad \qquad w_{1,0,0},\dots,\dots,w_{(N-1),(M-1),(K-1)})^T.
\end{align*}

As before, let $I_n$ denote the $n\times n$ identity matrix, and define
$$
\bD_x = D_x \otimes I_M \otimes I_K, \quad \bD_y = I_N \otimes D_y \otimes I_K, \quad \bD_z = I_N \otimes I_M \otimes D_z.
$$
Set $\bP=P_x\otimes P_y\otimes P_z$, define $(w,v)_{\bP}=w^T\bP v$ and the
corresponding norm $\norm{w}_{\bP}=(w,w)_{\bP}^{1/2}$. Also define 
$\mR=R_N\otimes I_M \otimes I_K$, $\mL=L_N\otimes I_M \otimes I_K$, $\mU=I_N \otimes R_M \otimes I_K$ and 
$\mD=I_N \otimes L_M \otimes I_K$, $\mA=I_N \otimes I_M \otimes R_K $, $\mB=I_N \otimes I_M \otimes L_K $. 

For a vector valued grid function $\mathbf{V}=(V^1,V^2,V^3)$, we use the following notation
$$
\bD_x \mathbf{V}=
\begin{pmatrix}
  \bD_x V^1\\ \bD_x V^2\\ \bD_x V^3
\end{pmatrix},
$$
and so on. In the same spirit, the $\bP$ inner product of vector valued
grid functions is defined by $(\mathbf{V},\mathbf{W})_{\bP} = (V^1,W^1)_{\bP} + 
(V^2,W^2)_{\bP} + (V^3,W^3)_{\bP}$.
Finally, we set 
\begin{align*}
\mathcal{S}&=[\left(P_x^{-1}\otimes I_M \otimes I_K\right)\left(\Sigma_{\mL}\mL +
    \Sigma_{\mR}\mR\right) + \left(I_N\otimes P_y^{-1}\otimes I_K\right)
  \left(\Sigma_{\mD}\mD + \Sigma_{\mU}\mU\right)\\
 & \qquad \qquad + \left(I_N\otimes I_M \otimes P_z^{-1}\right)
  \left(\Sigma_{\mA}\mA + \Sigma_{\mB}\mB\right)],
\end{align*}
where $\Sigma_{\mL}$, $\Sigma_{\mR}$, $\Sigma_{\mD}$,$\Sigma_{\mU}$,$\Sigma_{\mA}$ and
$\Sigma_{\mB}$ are diagonal matrices.
With these notations above the scheme for the differential equation \eqref{eq:viscousinduc1}
with boundary values $\B(x,y,z)=\mathbf{g}(x,y,z)$ reads
\begin{equation}
  \label{eq:dirichletdiscrete3d}
  \bV_t + u^1\circ \bD_x \bV + u^2\circ \bD_y \bV + u^3\circ \bD_z \bV - C \bV + 
  \epsilon \crl^2(\bV) = \mathcal{S}(\bV-\mathbf{g}), \quad t>0,
\end{equation}
while $\bV(0)$ is given. Here $C$ denotes the matrix
$$
C=
\begin{pmatrix}
  -\bD_y u^2 - \bD_z u^3 & \bD_y u^1 & \bD_z u^1 \\ \bD_x u^2 & -\bD_x u^1 - \bD_z u^3 & \bD_z u^2 \\ \bD_x u^3 & \bD_y u^3 & -\bD_x u^1 -\bD_y u^2 
\end{pmatrix},
$$
and 
$$
\crl^2 \left(\bV\right) =
  \begin{pmatrix}
    \bD_{y}\left( \bD_x V^2 -\bD_y V^1 \right) - \bD_z \left( \bD_z V^1 -\bD_x V^3\right) \\
    \bD_{z}\left( \bD_y V^3 -\bD_z V^2 \right) - \bD_x \left( \bD_x V^2 -\bD_y V^1\right) \\
    \bD_{x}\left( \bD_z V^1 -\bD_x V^3 \right) - \bD_y \left( \bD_y V^3 -\bD_z V^2\right) 
  \end{pmatrix}.
$$

\begin{remark}
Note that the stability result for the three-dimensional scheme can be achieved along the same way as in Theorem~\ref{thm:stab1} by choosing the penalty parameters as
\begin{equation}
  \begin{gathered}
    (\sigma'_{\mR})_{N-1,j,k}\le \frac{u^{1,-}(1,y_j,z_k)}{2}, \ (\sigma'_{\mL})_{0,j,k} \le  -\frac{u^{1,+}(0,y_j,z_k)}{2},
    \ (\sigma'_{\mU})_{i,M-1,k}\le \frac{u^{2,-}(x_i,1,z_k)}{2}, \\
   (\sigma'_{\mD})_{i,0,k} \le -\frac{u^{2,+}(x_i,0,z_k)}{2}, \ (\sigma'_{\mA})_{i,j,K-1} \le \frac{u^{3,-}(x_i,y_j,1)}{2},
\text{and}\ (\sigma'_{\mB})_{i,j,0} \le -\frac{u^{3,+}(x_i,y_j,0)}{2},
  \end{gathered}
  \label{eq:3dprimeconstants}
\end{equation}
\begin{equation}
  \label{eq:3ddoubleprimeconstants}
  \begin{gathered}
    \sigma''_{\mR}\le-\frac{1}{2p \,\Dx}, \ \sigma''_{\mL} \le -\frac{1}{2p \,\Dx}, 
    \ \sigma''_{\mU}\le-\frac{1}{2p\,\Dy}, \
    \sigma''_{\mD} \le -\frac{1}{2p\,\Dy}, \\
   \sigma''_{\mA} \le -\frac{1}{2p\,\Dz},
   \text{and}\ \ \sigma''_{\mB} \le -\frac{1}{2p\,\Dz},
  \end{gathered}
\end{equation}
and all other entries are 0.
\end{remark}
\section{Numerical Experiments}
\label{sec:numex}

The SBP-SAT schemes \eqref{eq:dirichletdiscrete} have been tested on a suite of
numerical experiments in order to demonstrate their effectiveness. We have used the second-order (first-order) accurate and the
fourth-order (second-order) accurate SBP operators in the interior
(boundary). From the results of \cite{SvardNordstrom}, these operators result in
overall second and third order accurate discretizations of the
equations. Henceforth, the second (first)-order accurate SBP scheme
will be denoted as $SBP2$ and the fourth (second)-order accurate SBP
scheme will be denoted as $SBP4$ after their orders of accuracy in the
interior.  Time integration is performed by using a standard second-order
accurate Runge-Kutta scheme. (Using a higher-order Runge Kutta scheme didn't affect the quality of the computational results.)

\subsection*{Numerical experiment $1$:}
To begin with, we consider a divergence free velocity field $\U(x,y) = (-y,x)^T$ and a slightly modified form of \eqref{eq:main} given by 
\begin{equation}
  \label{eq:forceeqn}
  \begin{aligned}
    \B_t + \Lambda_1\B_x + \Lambda_2\B_y - C\B & = \epsilon \left[
        \begin{aligned}
          - ((B^2)_{xy}-(B^1)_{yy})\\
           ((B^2)_{xx}-(B^1)_{xy})
        \end{aligned}
      \right] + \mathcal{F},
    \end{aligned}
\end{equation}
where the forcing function $\mathcal{F}$ is given by,
\begin{equation}
\label{eq:force}
\begin{aligned}
&f_{1}= 160\epsilon (y-0.5\sin(t))
\left[
-4 + 40\lbrace(x-0.5 \cos(t))^2 + (y-0.5 \sin(t))^2\rbrace
\right]
e^{A(t)},\\
&f_{2}= - 160\epsilon (y-0.5\cos(t))
\left[
-4 + 40\lbrace(x-0.5 \cos(t))^2 + (y-0.5 \sin(t))^2\rbrace
\right]
e^{A(t)},
\end{aligned}
\end{equation}
with $A$: 
$$
A(t) = -20\lbrace(x\cos(t)+y\sin(t)-0.5)^2 + (-x\sin(t)+ y\cos(t))^2\rbrace
$$.

We note that it is straightforward to extend the stability results of the previous section to SBP-SAT schemes for \eqref{eq:forceeqn}. The forcing term is evaluated in a standard manner. The forcing function in \eqref{eq:forceeqn} enables us to calculate an exact (smooth) solution of the equation given by,
\begin{equation}
  \label{eq:ex}
  \B(\X,t)=R(t)\B_0(R(-t)\X),
\end{equation}
where $R(t)$ is a rotation matrix with angle $t$. Note that this exact solution represents the rotation of the initial data about the origin. In fact, \eqref{eq:ex} is also an exact solution of \eqref{eq:forceeqn} with both the forcing term $\mathcal{F}$ and the resistivity $\epsilon$ set to zero. Hence, this example follows from a similar example for the inviscid magnetic induction equations considered in \cite{TF1,fkrsid1}.

For initial data, we choose the divergence free magnetic field:
\begin{equation}
  \B_0(x,y)=4
  \begin{pmatrix}
    -y\\ x-\frac{1}{2}
  \end{pmatrix}
  e^{-20\left((x-1/2)^2+y^2\right)},
\end{equation}
and the computational domain $[-1,1] \times [-1,1]$. In this case, the
exact solution is a smooth hump (centered at $(1/2,0)$
and decaying exponentially) rotating about the origin and completing
one rotation in time $t = 2 \pi$. The hump remains completely inside the domain during the course of the rotation. Since the exact solution is known in
this case, we use this solution to specify the
data for the boundary conditions \eqref{eq:dirichletboundary} or \eqref{eq:mixedboundary}. 
The above setup is simulated using the SBP2 and SBP4 schemes. Using Dirichlet or mixed boundary conditions led to very similar results. Hence, we present results only with the mixed boundary conditions \eqref{eq:mixedboundary} in this case. The time-integration was performed with a second-order Runge-Kutta method at a $CFL$ number of $0.5$. The resistivity $\epsilon = 0.01$ was used. Also we have used the value of the penalty parameters as mentioned in Theorem~\ref{thm:stab1}, for instance we used 
$ \sigma_{\mL} = \sigma'_{\mL} + \epsilon \sigma''_{\mL} =  - \frac{u^{1,+}}{2} - \frac{\epsilon}{2p\Dx}$ and so on. We plot the $l^2$ norm of the magnetic field: $\B = \sqrt{(B^1)^2 + (B^2)^2}$, at times $t=\pi$ (half rotation) and $t=2\pi$ (full rotation) for both the SBP2 and SBP4 schemes in figure \ref{fig:1}.
\begin{figure}[htbp]
  \centering
    \subfigure[SBP2, half rotation]{\includegraphics[width=0.45\linewidth]{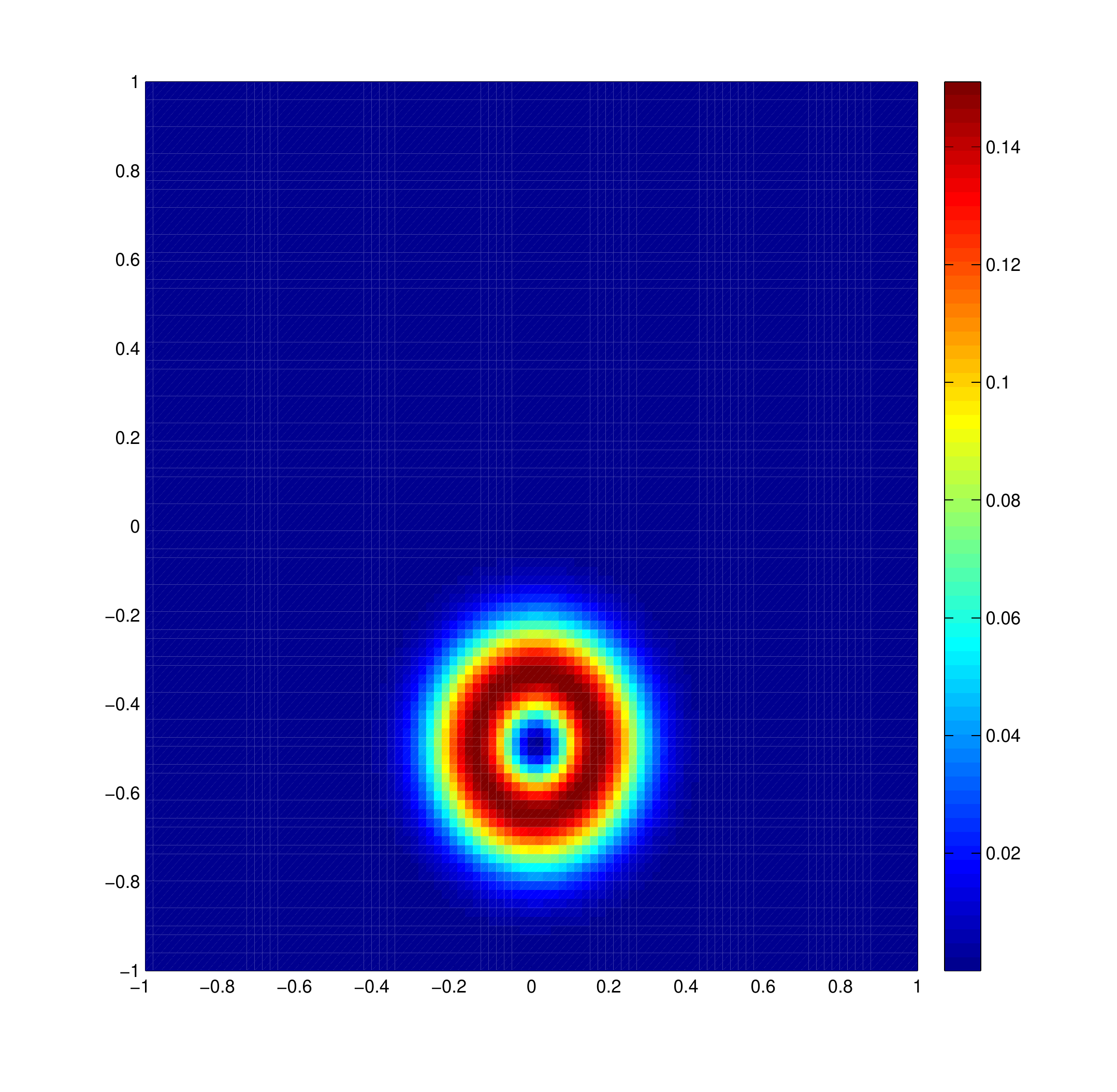}} 
    \subfigure[SBP2, full rotation]{\includegraphics[width=0.45\linewidth]{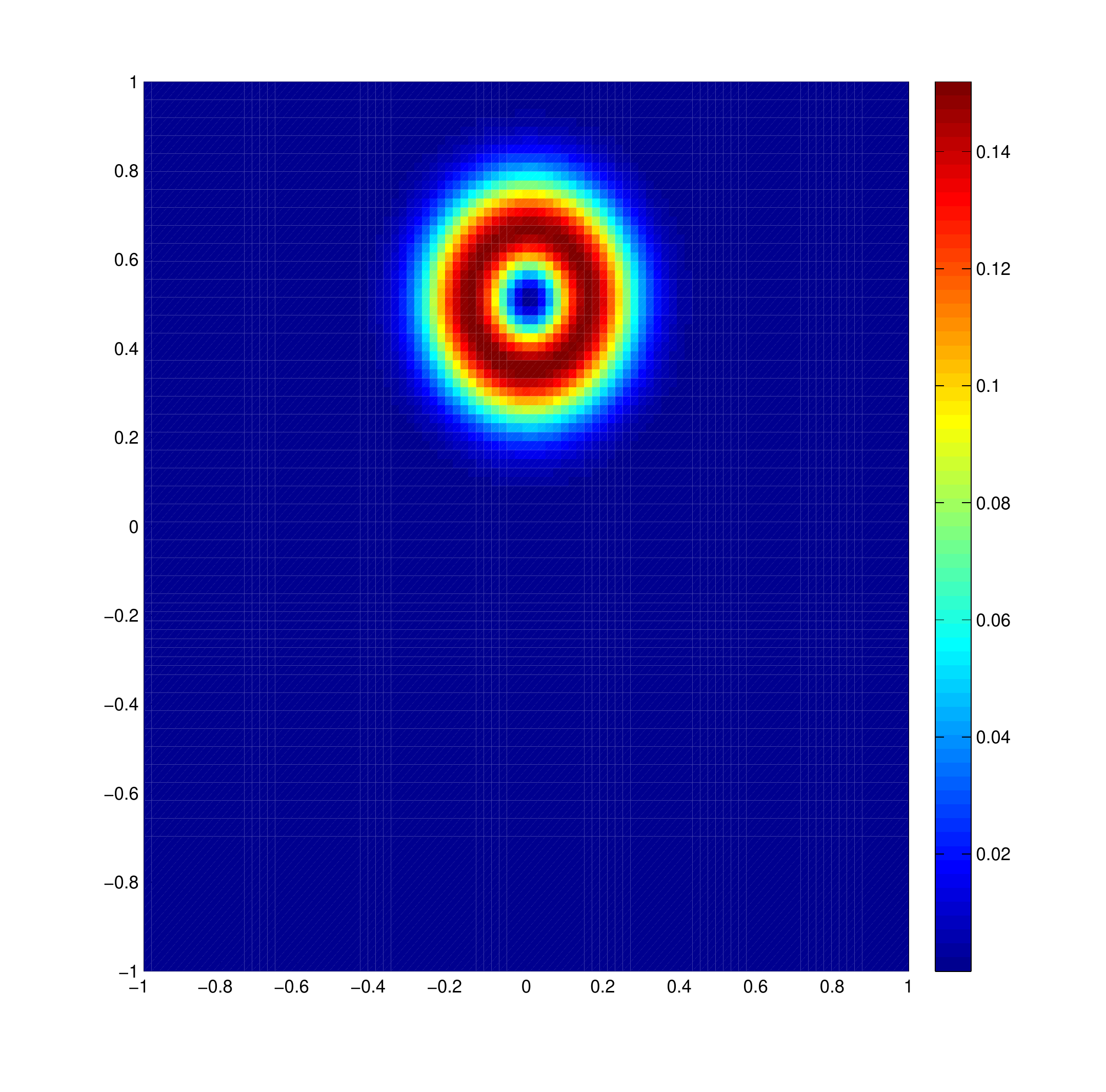}}
    \subfigure[SBP4, half rotation]{\includegraphics[width=0.45\linewidth]{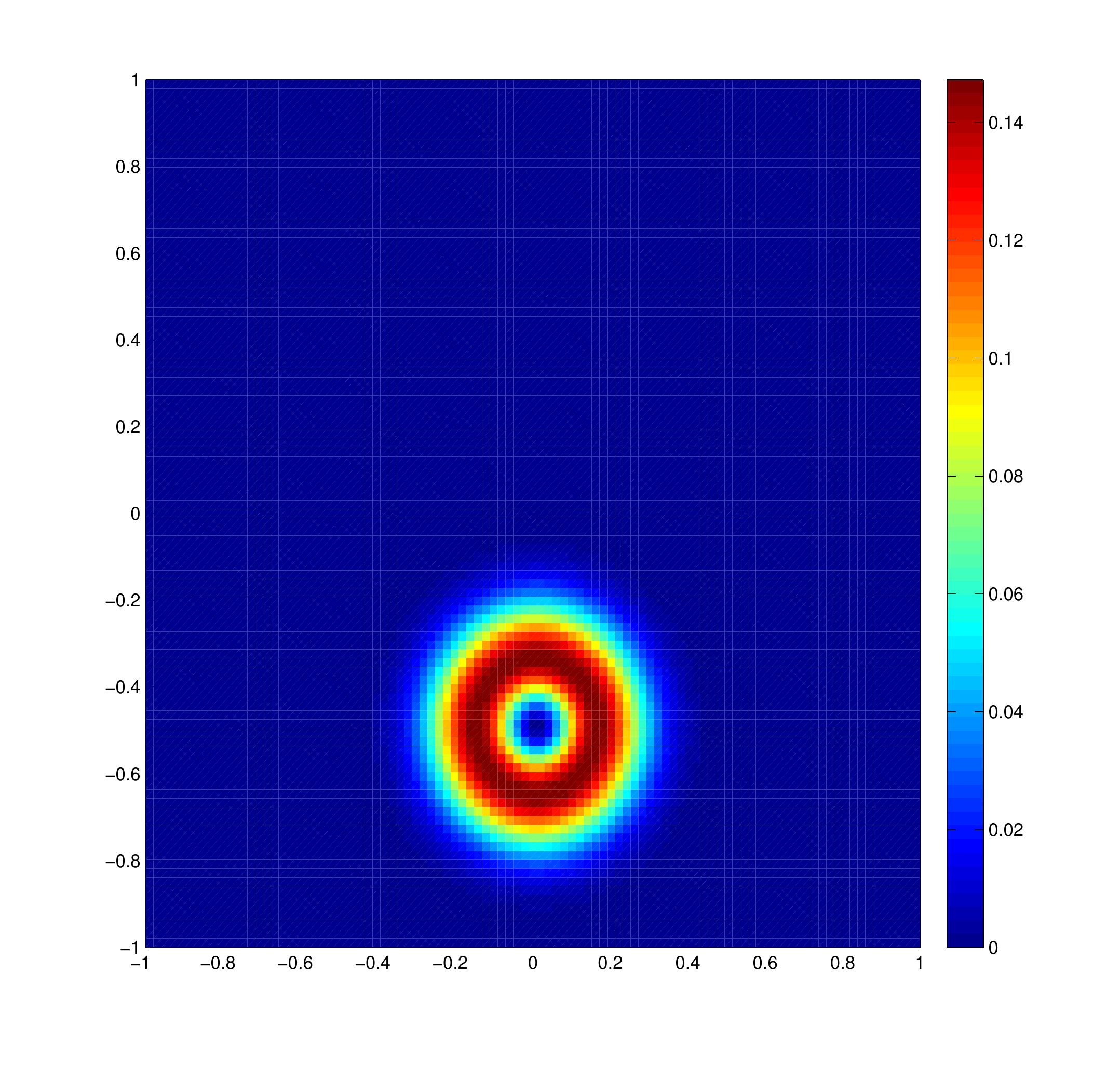}}
    \subfigure[SBP4, full rotation]{\includegraphics[width=0.45\linewidth]{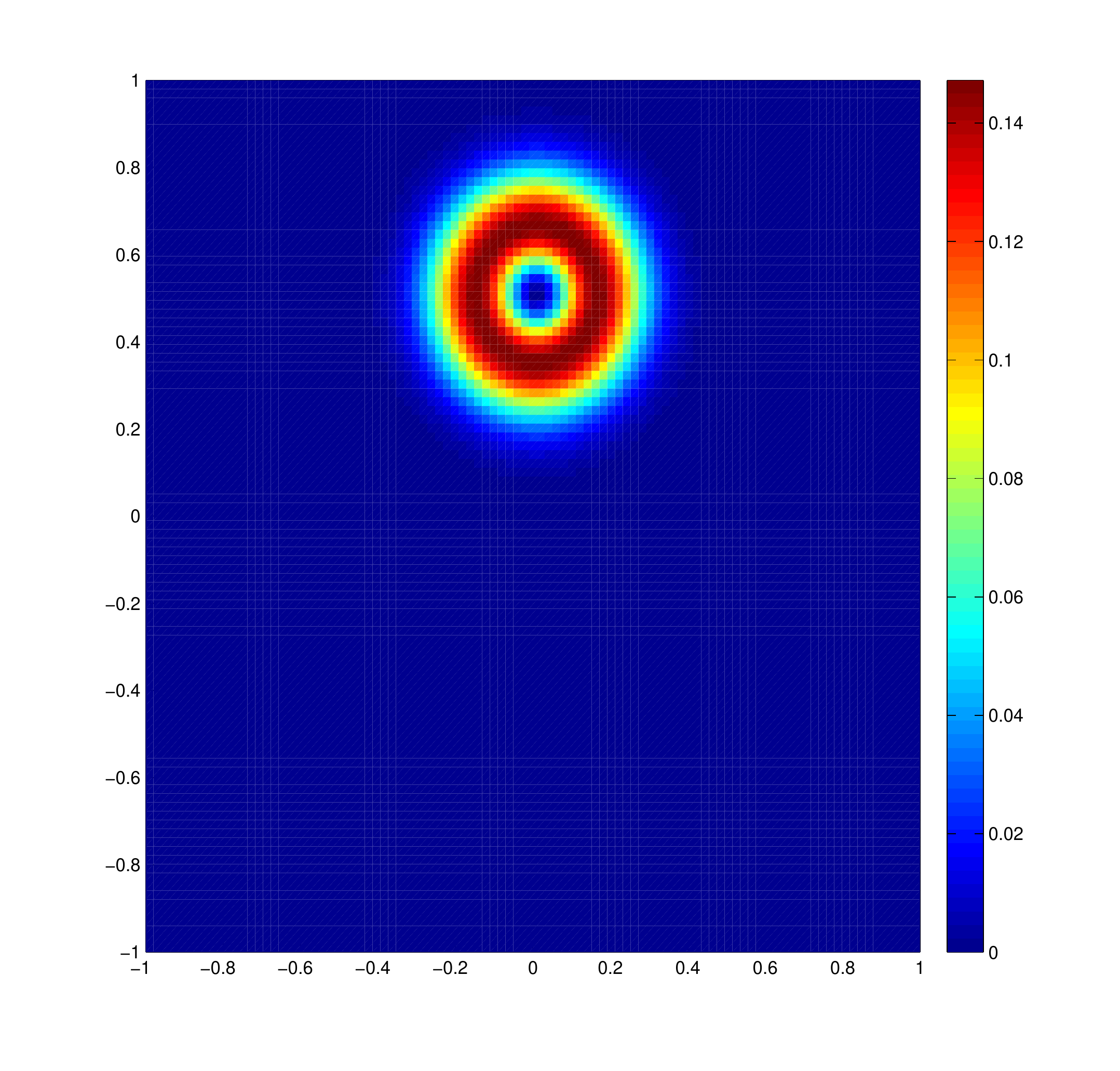}}
\caption{Numerical Experiment $1$: $\| \B \| = \sqrt{\B_1^2 + \B_2^2}$ for the SBP2 and SBP4 schemes on a $100 \times 100$ mesh.}
\protect \label{fig:1}
\end{figure}
As shown in the figure, both $SBP2$ and $SBP4$ schemes resolve the
solution quite well. There are very few noticeable differences between the second and fourth order schemes at this resolution. The shape of the hump is maintained during the rotation. A quantitative view of the
results is presented in Table~\ref{tab:1}
where
\begin{align}
\textrm{relative percentage error}=\frac{\|\B_{num}-\B_{ex}\|_2}{\|\B_{ex}\|_2}\times 100,
\end{align}
$\B_{num}$ is the numerical approximation and $\B_{ex}$ is the exact reference solution and $\norm{w}_{2}=(\Dx\sum_{k} w_k^2)^{1/2}$. 
\begin{table}[htbp]
\centering
\begin{tabular}{c|rrrr}
Grid size& $SBP2$ & rate & $SBP4$ & rate\\
\hline
40$\times$40   & 2.1e{-1}         &                      & 1.6e{-2}     &  \\
80$\times$80   & 5.7e{-2}         & 1.9                  & 1.1e{-3}     & 3.9 \\
160$\times$160 & 1.3e{-2}         & 2.1                  & 1.1e{-4}     & 3.3 \\
320$\times$320 & 3.1e{-3}         & 2.0                  & 1.3e{-5}     & 3.1 \\
640$\times$640 & 7.5e{-4}         & 2.0                  & 1.6e{-6}     & 3.0
\end{tabular}
\caption{Relative percentage errors in $L^2$ for $\| \B\|$ at time $t = 2
  \pi$ and rates of convergence for
  numerical experiment $1$.} 
\label{tab:1}
\end{table}
The errors are computed at time $t = 2 \pi$ (one rotation) on a sequence
of meshes with both the $SBP2$ and $SBP4$ schemes. The results show
that the errors are quite low, particularly for $SBP4$ and the
rate of convergence approaches $2$ for $SBP2$ and $3$ for $SBP4$. This
is consistent with the theoretical order of accuracy for SBP operators
(see \cite{SvardNordstrom}). The very low values of error with $SBP4$ suggest
that one should use high order schemes to resolve
interesting solution features. 

Another feature of numerical solutions of equations \eqref{eq:forceeqn} is the behavior of divergence of the magnetic field. Note that both the initial data and the forcing function \eqref{eq:force} are divergence free. Hence, the divergence of the exact solution of \eqref{eq:forceeqn} should remain zero for all time. However, as remarked before, we don't attempt to preserve any particular discrete form of divergence. Hence, numerical divergence errors are an indicator of the performance of the schemes. We define
the discrete divergence operator:
\begin{align*}
  \mathrm{div}_P(V)= \bD_xV^{1} + \bD_y V^{2}.
\end{align*}
This corresponds to the standard centered discrete divergence operator
at the corresponding orders of accuracy.  The divergence errors in
$l^2$ and rates of convergence at time $t=2\pi$ for the $SBP2$ and
$SBP4$ schemes on a sequence of meshes are presented in
Table~\ref{tab:2}.
\begin{table}[htbp]
\centering
\begin{tabular}{c|rrrr}
Grid size  & $SBP2$ & rate  & $SBP4$ & rate \\
\hline
20$\times$20  & 8.9e-1   &       & 4.5e-1   &  \\
40$\times$40  & 3.9e-1   & 1.1   & 8.0e-2   & 2.9 \\
80$\times$80  & 1.0e-1   & 2.0   & 4.2e-3   & 3.8 \\
160$\times$160& 2.7e-2   & 1.9   & 5.0e-4   & 3.0 \\
320$\times$320& 9.5e-3   & 1.5   & 8.0e-5   & 2.6
\end{tabular}
\caption{Numerical Experiment $1$:
  Divergence errors in $l^2$ and rates of convergence at time
  $t=2\pi$. }
\label{tab:2} 
\end{table}
From Table~\ref{tab:2}, we conclude that although the initial
divergence is zero, the discrete divergence computed with both the
$SBP2$ and $SBP4$ schemes is not zero. However, the divergence errors
are very low in magnitude even on fairly coarse meshes and converge to
zero at a rate of $1.5$ and $2.5$ for $SBP2$ and $SBP4$ scheme
respectively. A simple truncation error analysis suggests that these rates
for the $SBP2$ and $SBP4$ schemes are optimal. 
Finally, we emphasize again that the quality of the solutions are good and the convergence rates did not suffer, despite the scheme not preserving any form of discrete divergence.

\subsection*{Numerical Experiment $2$}
In the previous numerical experiment, the hump (representing the interesting
parts of the solution) was confined to the interior of the domain. A more challenging test of the boundary closures is provided if the hump interacts with the boundary. We proceed to test this situation by considering \eqref{eq:forceeqn} in a domain $[0,1] \times [0,1]$ with exactly the same initial data, resistivity and forcing function as in the previous numerical experiment. The exact solution \eqref{eq:ex}, being a rotation about the origin, now exits the domain first at the lower boundary (including a corner) and enters the domain through another part of the boundary during the course of a single rotation. We study this interaction by simulating \eqref{eq:forceeqn} with the SBP2 and SBP4 schemes. We consider \eqref{eq:forceeqn} with Dirichlet boundary conditions. The boundary data are calculated by evaluating the exact solution \eqref{eq:ex} at the boundary.

The $L^2$ norm of $\B$ after half a rotation and one full rotation is plotted in figure \ref{fig:2}. 
\begin{figure}[htbp]
  \centering
    \subfigure[SBP2, half rotation]{\includegraphics[width=0.45\linewidth]{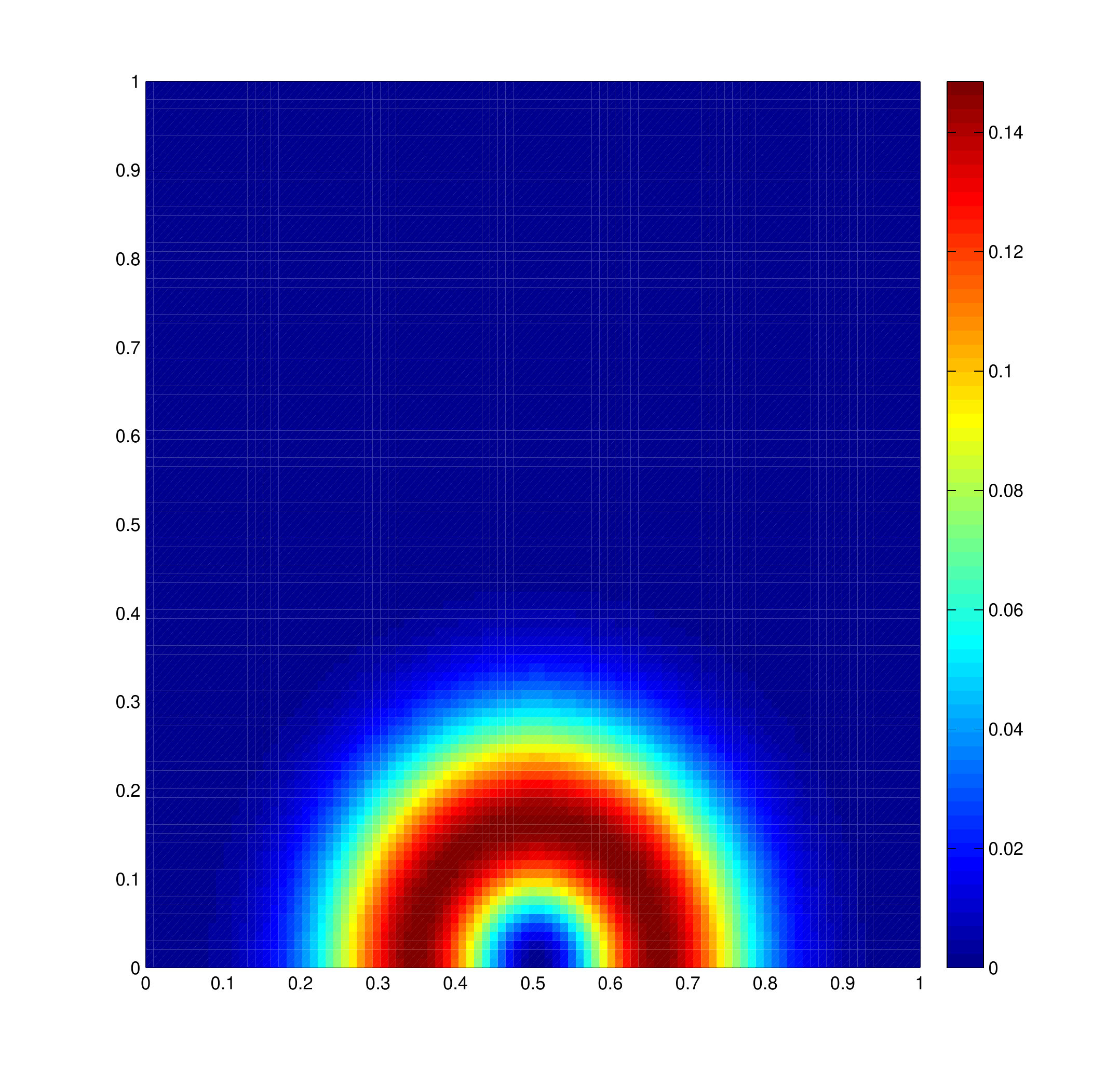}} 
    \subfigure[SBP2, full rotation]{\includegraphics[width=0.45\linewidth]{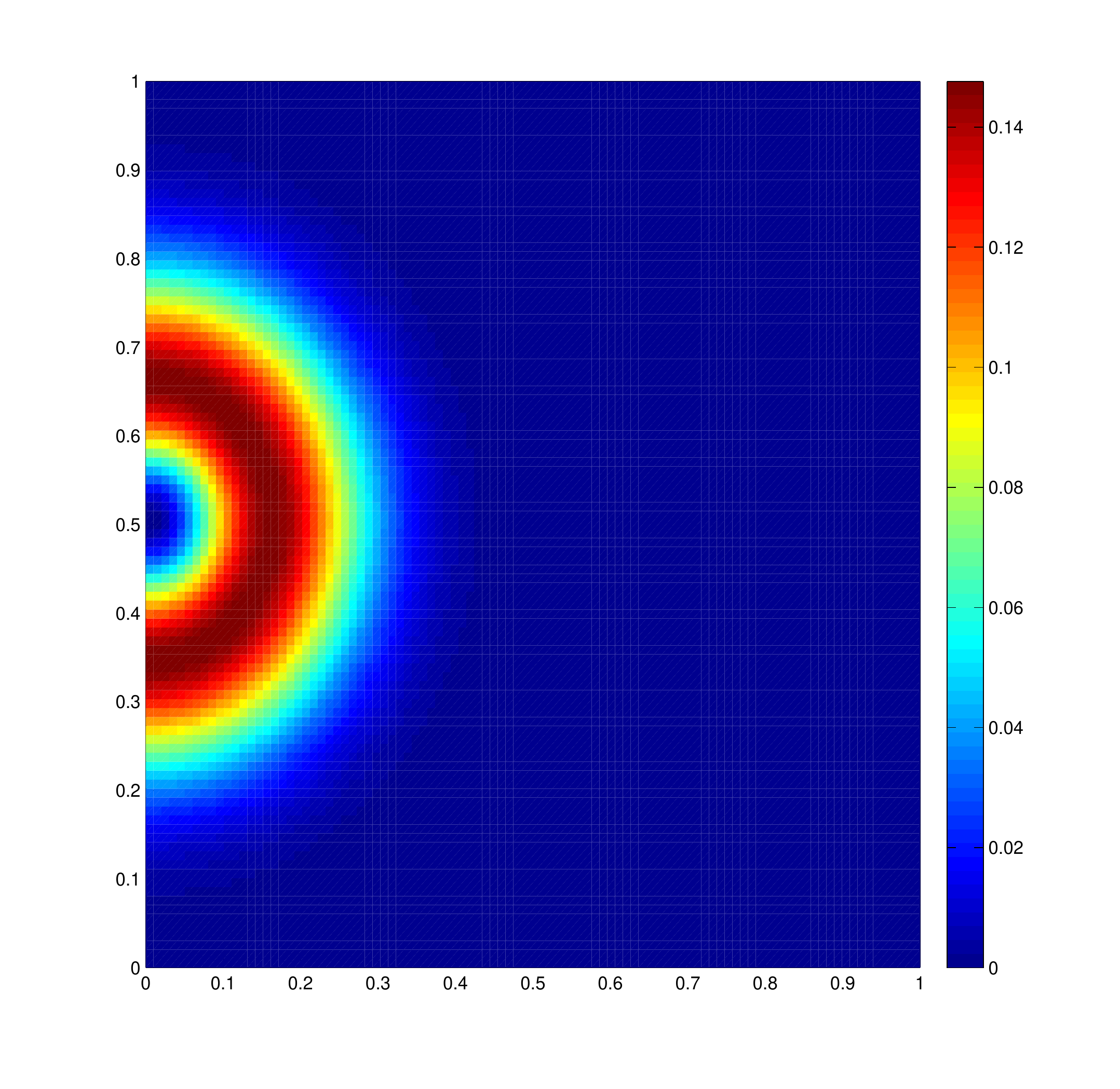}}\\
    \subfigure[SBP4, half rotation]{\includegraphics[width=0.45\linewidth]{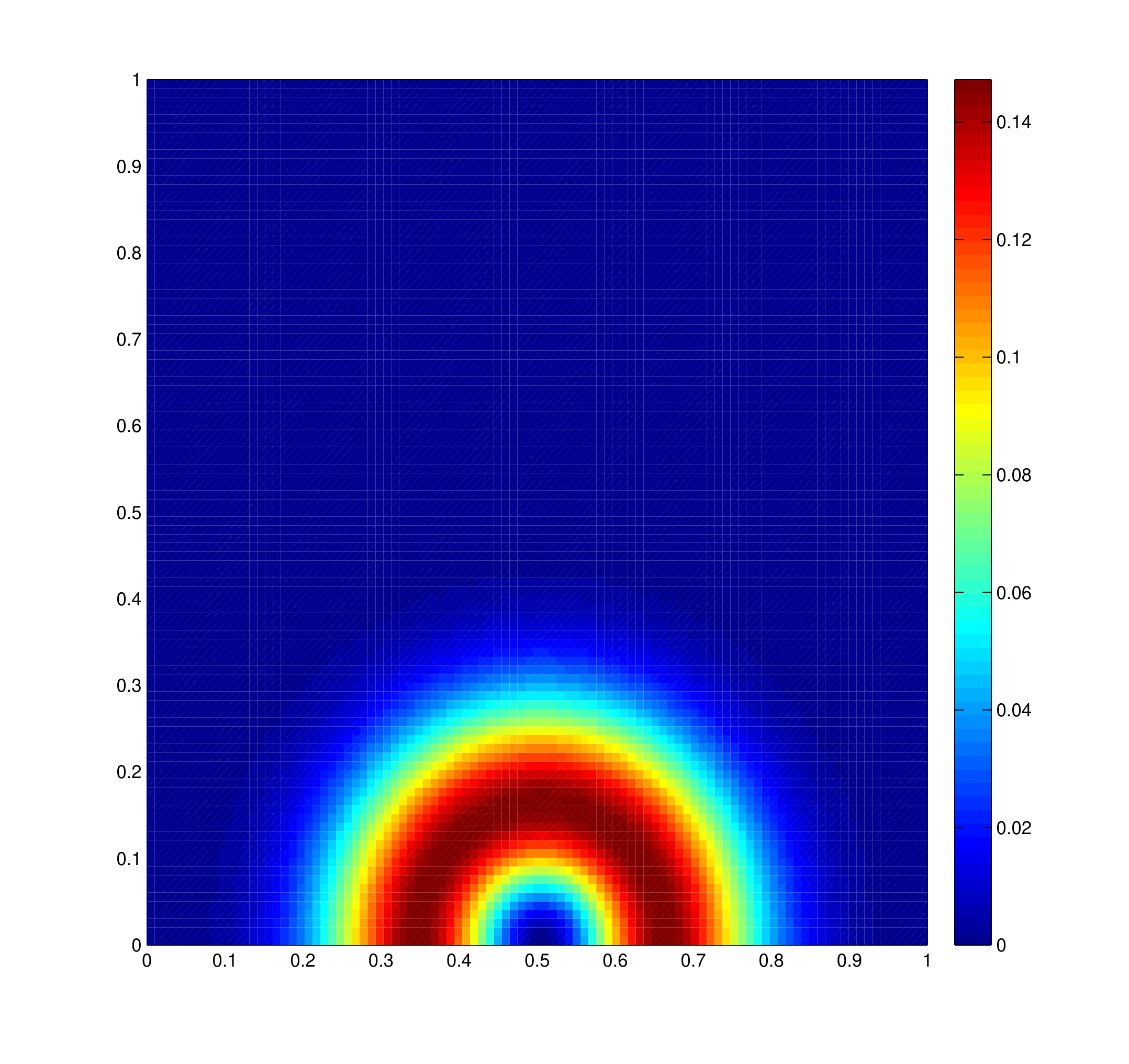}}
    \subfigure[SBP4, full rotation]{\includegraphics[width=0.45\linewidth]{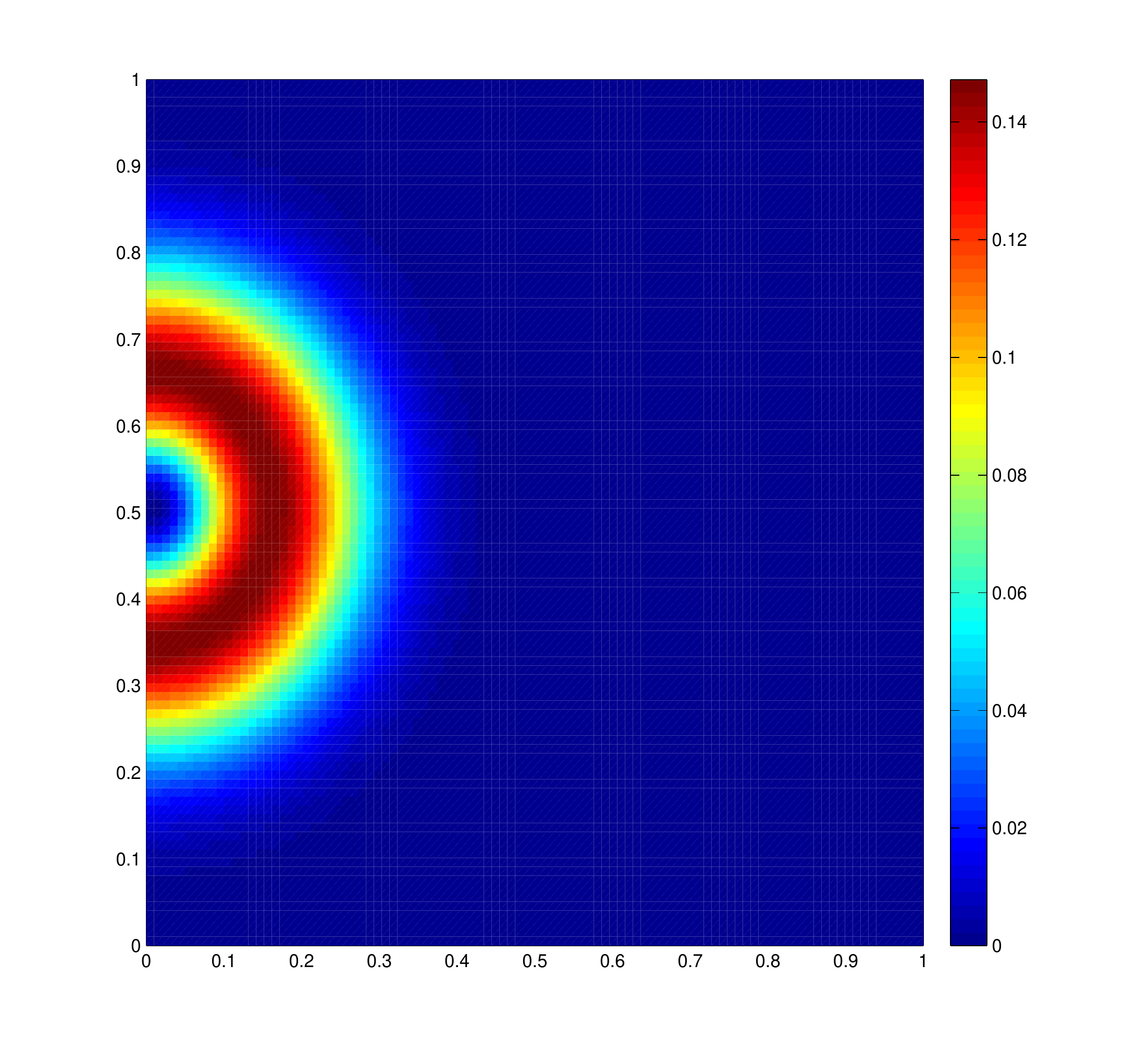}}
\caption{Numerical Experiment $2$: $|\B| = \sqrt{\B_1^2 + \B_2^2}$ for the SBP2 and SBP4 schemes on a $100 \times 100$ mesh.}
\protect \label{fig:2}
\end{figure}
The figure shows that both the SBP2 and SBP4 schemes resolve the solution quite well and maintain the shape of the hump. Furthermore, the boundary interactions are resolved in a stable and accurate manner, showing that the choice of boundary closures was proper. The errors in $L^2$ are shown in tables \ref{tab:3} and \ref{tab:div2} and we observe that the errors are quite low and the correct rates of convergence are obtained. The results were very similar to those obtained in numerical experiment $1$.  
\begin{table}[htbp]
\centering
\begin{tabular}{c|rrrr}
 Grid size & $SBP2$ & rate & $SBP4$  & rate\\
\hline
20$\times$20    & 5.5e{-2}     &          & 1.5e{-2}    &   \\
40$\times$40    & 1.0e{-2}     & 2.4      & 1.9e{-3}    & 2.9 \\
80$\times$80    & 2.3e{-3}     & 2.2      & 1.6e{-4}    & 3.5  \\
160$\times$160  & 5.4e{-4}     & 2.0      & 1.9e{-5}    & 3.1 \\ 
320$\times$320  & 1.3e{-4}     & 2.0         & 2.5e{-6}    & 3.0  
\end{tabular}
\caption{
Relative percentage errors in $L^2$ and rates of convergence for numerical experiment $2$.} 
\label{tab:3}
\end{table}

\begin{table}[htbp]
\centering
\begin{tabular}{c|rrrr}
Grid size& $SBP2$ & rate & $SBP4$ & rate\\
\hline
20$\times$20   & 5.6e{-2}         &                      & 9.8e{-2}     &  \\
40$\times$40   & 3.3e{-2}         & 0.8                  & 3.7e{-2}     & 1.4 \\
80$\times$80   & 9.3e{-3}         & 1.9                  & 2.3e{-3}     & 4.0 \\
160$\times$160 & 2.1e{-3}         & 2.1                  & 2.5e{-4}     & 3.2 \\
320$\times$320 & 7.4e{-4}         & 1.5                  & 4.1e{-5}     & 2.6
\end{tabular}
\caption{Divergence errors in $l^2$ and rates of convergence at time $t = 2\pi$ for numerical experiment $2$.} 
\label{tab:div2}
\end{table}

This shows that the SAT technique for imposing
boundary conditions weakly works very well even with complicated
boundary data.

\subsection*{Numerical Experiment $3$}

In the first two examples, we verified the accuracy of our schemes by manufactured solutions, using specific forcing functions. Next, we compute solutions of the magnetic induction equations in its original form \eqref{eq:main} (without any forcing) to illustrate the role of the resistivity, $\epsilon$, in driving the dynamics. We use the same velocity field and initial data as in the previous two numerical experiments. The domain is $[-1,1] \times [-1,1]$. We compute solutions till $t = 2 \pi$ with two different values of resistivity. The results with $\epsilon = 0.05$ and $\epsilon = 0.001$ are shown in figure \ref{fig:3}.
\begin{figure}[htbp]
  \centering
    \subfigure[SBP2, $\epsilon = 0.05$]{\includegraphics[width=0.45\linewidth]{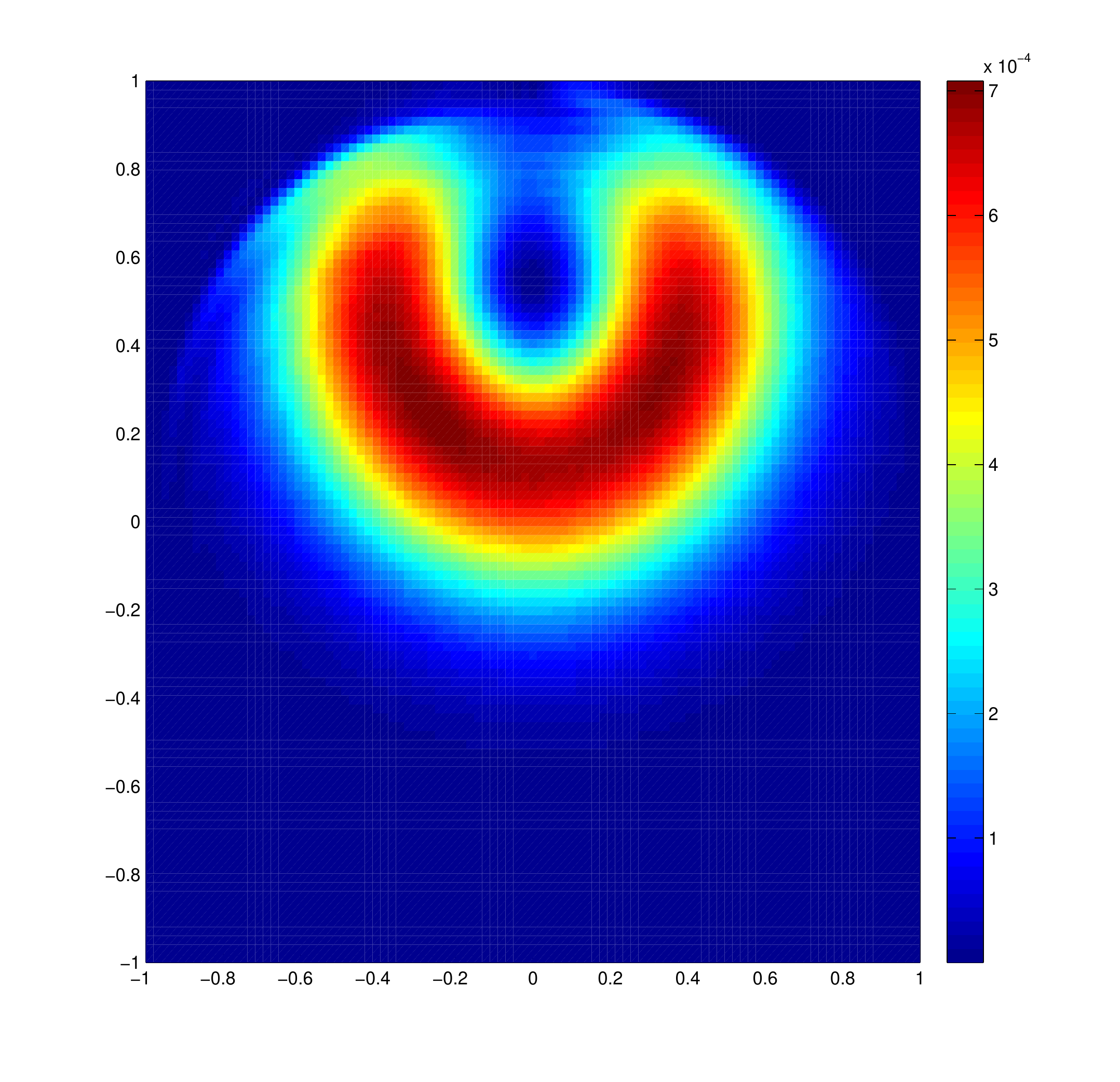}} 
    \subfigure[SBP2, $\epsilon = 0.001$]{\includegraphics[width=0.45\linewidth]{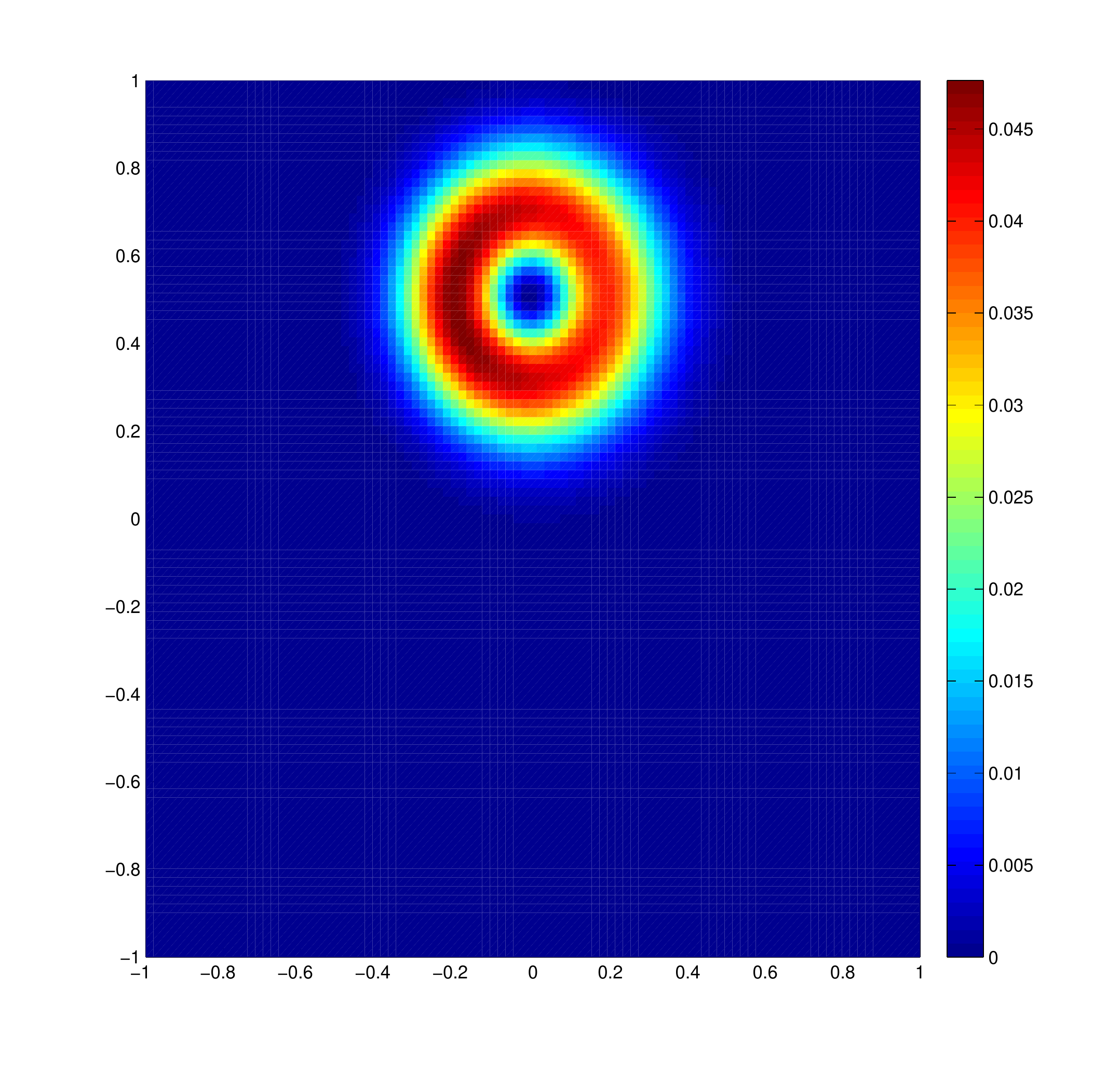}}\\
    \subfigure[SBP4, $\epsilon = 0.05$]{\includegraphics[width=0.45\linewidth]{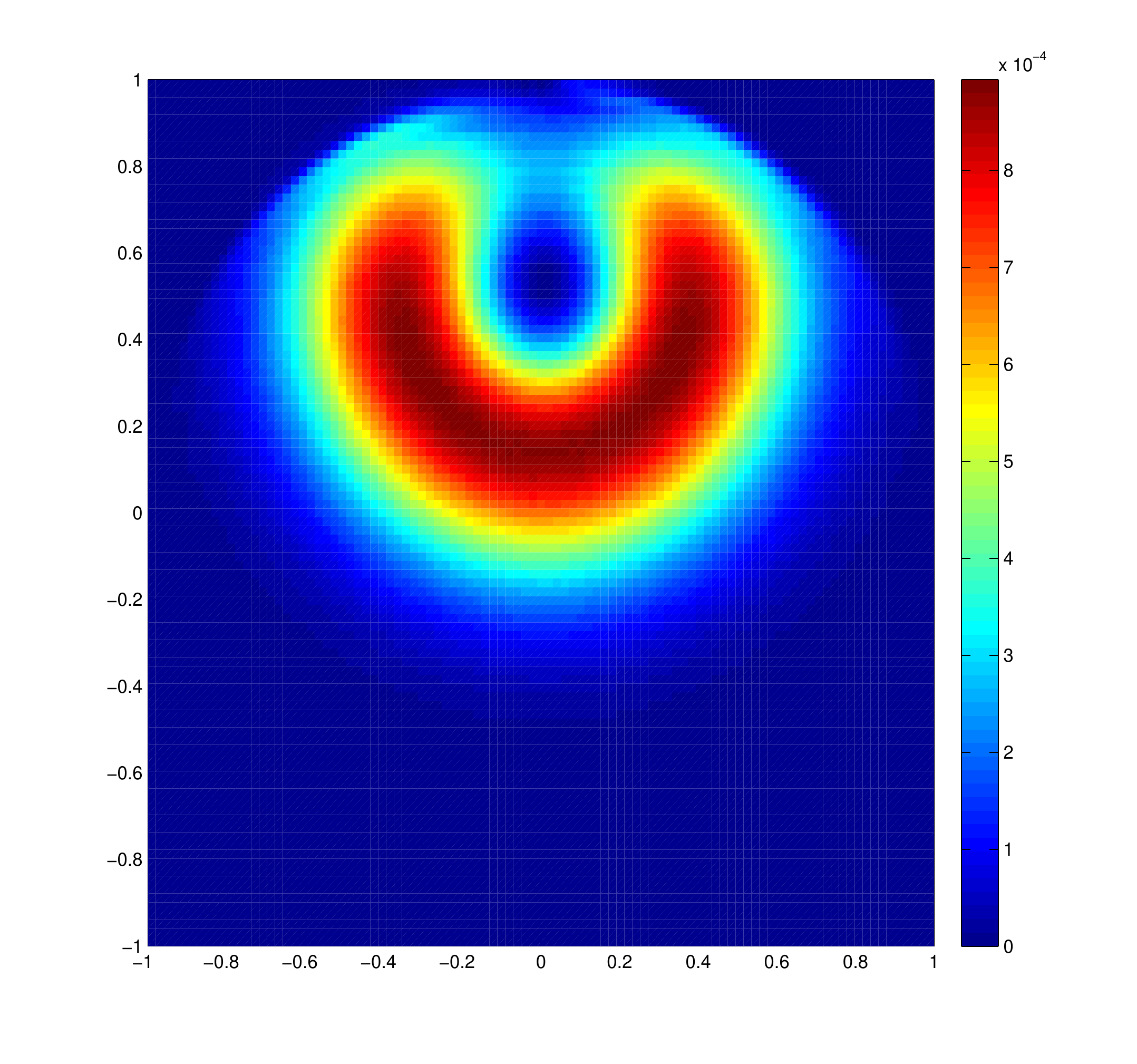}}
    \subfigure[SBP4, $\epsilon = 0.001$]{\includegraphics[width=0.45\linewidth]{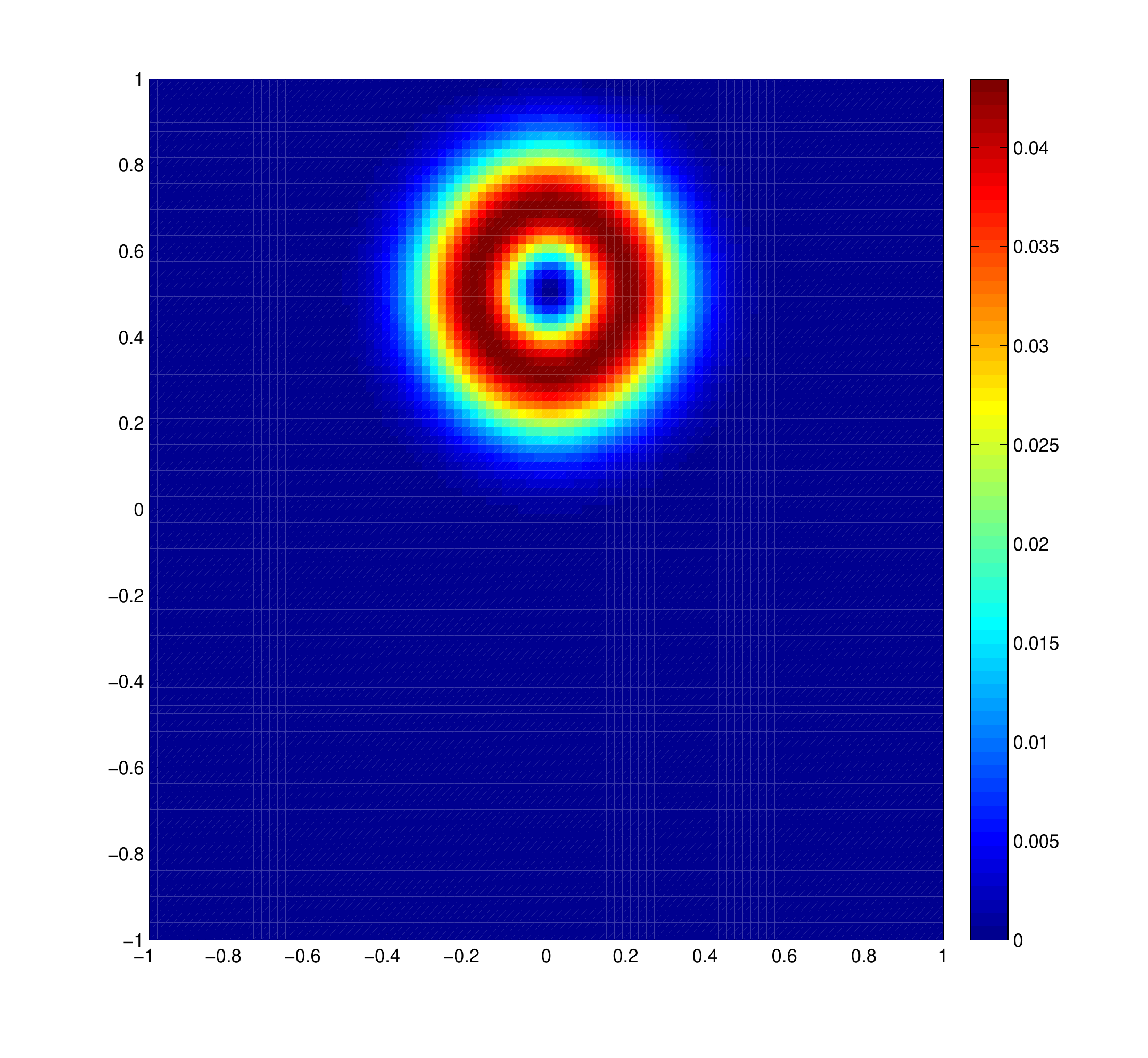}}
\caption{Numerical Experiment $3$: $|\B| = \sqrt{\B_1^2 + \B_2^2}$ for the SBP2 and SBP4 schemes on a $100 \times 100$ mesh.}
\protect \label{fig:3}
\end{figure} 
In the absence of exact solutions for $\B$, we can only compare qualitative features in this case. For low resistivities like $\epsilon = 0.001$, the problem is very close to its inviscid version and the hump doesn't show much distortion during the rotation. The results in this case are very similar to the ones for the inviscid magnetic induction equations presented in \cite{unsm}. The SBP4 scheme is slightly sharper (and hence more accurate) than the SBP2 scheme. Taking a higher value of resistivity $\epsilon = 0.05$, the viscous term starts playing an important role in the dynamics and the hump is expected to be smeared out. This is clearly shown in figure \ref{fig:3}. The results of SBP2 and SBP4 schemes are very similar in this case. 

Since we do not enforce the divergence constraint excactly, we can use divergence errors as a quantitative measure. We know that the initial ${\rm div}(\B)=0$ and it should remain so during the computation.  In tables \ref{tab:div31} and \ref{tab:div32}, we display the divergence errors and their convergence rates for two different values of $\epsilon$. The convergence rates are as expected and we note that the errors are much lower for the higher order scheme. Thus, this experiment illustrates that both schemes are quite robust and efficient for different values of the resistivity.
\begin{table}[htbp]
\centering
\begin{tabular}{c|rrrr}
Grid size& $SBP2$ & rate & $SBP4$ & rate\\
\hline
20$\times$20   & 1.0e{0}          &                      & 7.3e{-1}     &  \\
40$\times$40   & 8.0e{-1}         & 0.4                  & 1.2e{-1}     & 2.6 \\
80$\times$80   & 2.7e{-1}         & 1.6                  & 8.2e{-3}     & 3.8 \\
160$\times$160 & 7.0e{-2}         & 2.0                  & 1.0e{-3}     & 3.0 \\
320$\times$320 & 2.5e{-2}         & 1.5                  & 1.7e{-4}     & 2.6
\end{tabular}
\caption{Divergence errors in $L^2$ and rates of convergence at time $t = 2
 \pi$ for numerical experiment $3$ with $\epsilon=0.001$.} 
\label{tab:div31}
\end{table}
\begin{table}[htbp]
\centering
\begin{tabular}{c|rrrr}
Grid size& $SBP2$ & rate & $SBP4$ & rate\\
\hline
20$\times$20   & 7.3e{-1}         &                      & 1.4e{-2}     &  \\
40$\times$40   & 5.0e{-2}         & 0.8                  & 2.2e{-3}     & 2.7 \\
80$\times$80   & 1.1e{-2}         & 2.2                  & 1.7e{-4}     & 3.7 \\
160$\times$160 & 2.9e{-3}         & 1.9                  & 2.1e{-5}     & 3.1 \\
320$\times$320 & 9.7e{-4}         & 1.6                  & 3.4e{-6}     & 2.6
\end{tabular}
\caption{Divergence errors in $L^2$ and rates of convergence at time $t = 2
 \pi$ for numerical experiment $3$ with $\epsilon=0.05$.} 
\label{tab:div32}
\end{table}

\section{Conclusion}
\label{sec:conc}
We have presented finite difference schemes for the magnetic induction equations with resistivity. These equations arise as a sub model in the resistive MHD equations of plasma physics. We have shown that the 
symmetric form  \eqref{eq:viscousinduc1} of the magnetic induction equations with resistivity is well-posed with general initial data and both Dirichlet boundary conditions as well as mixed boundary conditions. SBP-SAT based finite difference schemes were designed for the initial-boundary-value problem corresponding to the magnetic induction equations with resistivity. These schemes were based on the 
form \eqref{eq:viscousinduc1} and use SBP finite difference operators to approximate spatial derivatives and an SAT technique for implementing boundary conditions. The resulting schemes are high-order accurate and shown to be energy stable. 

The schemes were tested on numerical experiments illustrating both the stability as well as high-order of accuracy. We also the use of the divergence errors as a measure of the accuracy of the solution. The results show that the SBP-SAT approach is a promising method to simulate initial boundary value problems for more complicated equations like the resistive MHD equations. 

\section*{Appendix}

 For the sake of completeness, here we will present all the SBP operators used in the analysis. We consider second and third order accurate finite difference approximations.

\subsection*{ First order accuracy at the boundary:}
The discrete norm $P$ and the discrete second order accurate SBP operator $P^{-1}Q$ approximating $\frac{d}{dx}$ are given by
$$
P = h
\begin{pmatrix}
  \frac{1}{2} &       &        &             & \\
              & 1     &        &             & \\
              &       &\ddots   &            & \\
              &       &         &1           &   \\ 
              &       &          &           & \frac{1}{2}
\end{pmatrix}, \quad 
P^{-1}Q = \frac{1}{h}
\begin{pmatrix}
  -1          & 1           &                   &                    & \\
  -\frac{1}{2}& 0           &\frac{1}{2}        &                    & \\
              & \ddots      &\ddots             & \ddots             & \\
              &             & -\frac{1}{2}      &0                   & \frac{1}{2}   \\ 
              &             &                   & -1                 & 1
\end{pmatrix}
$$
We have used the operator $P^{-1}QP^{-1}Q$ to approximate $\frac{d^2}{dx^2}$.
\subsection*{ Second order accuracy at the boundary:}

The discrete norm $P$ is defined as
$$
P = h
\begin{pmatrix}
  \frac{17}{48} &                   &                 &                  &        &        &    & \\
                & \frac{59}{48}     &                 &                  &        &        &    & \\
                &                   &\frac{43}{48}    &                  &        &        &    & \\
                &                   &                 &\frac{49}{48}     &        &        &    & \\ 
                &                   &                 &                  & 1      &        &    & \\
                &                   &                 &                  &        & \ddots &    &
\end{pmatrix}
$$
The discrete difference SBP operator $P^{-1}Q$ approximating $\frac{d}{dx}$ is given by
$$
P^{-1}Q = \frac{1}{h}
\begin{pmatrix}
 -\frac{24}{17} & \frac{59}{34}  &-\frac{4}{17}   & -\frac{3}{34}  &   0          &   0         &  0          &      &\\
  -\frac{1}{2}  & 0              & \frac{1}{2}    &  0             &   0          &   0         &  0          &      &\\
  \frac{4}{43}  & -\frac{59}{86} & 0              & \frac{59}{86}  &-\frac{4}{43} &   0         &  0          &      &\\
  \frac{3}{98}  & 0              & -\frac{59}{98} &   0            &\frac{32}{49} &-\frac{4}{49}&  0          &      &\\ 
  0             & 0              & \frac{1}{12}   & -\frac{2}{3}   & 0            &\frac{2}{3}  &-\frac{1}{12}&      &\\
                &                &                & \ddots         & \ddots       & \ddots      & \ddots      &\ddots&
\end{pmatrix}
$$

\end{document}